\newcommand{\R}{\mathbb{R}}
\newcommand{\Htilde}{\widetilde H}
\newcommand{\iprod}[1]{\langle#1\rangle}
\newcommand{\Tr}{\mathcal{T}}
\newcommand{\Null}{\mathcal{N}}
\newcommand{\Range}{\mathcal{R}}
\newcommand{\LT}{\mathcal{L}}
\newcommand{\diam}{\operatorname{diam}}
\newcommand{\dist}{\operatorname{dist}}
\newcommand{\spec}{\operatorname{spec}}
\newcommand{\usng}{u_{\operatorname{s}}}
\newcommand{\ureg}{u_{\operatorname{r}}}
\newcommand{\E}{\mathcal{E}}
\newcommand{\Ehat}{\widehat{\mathcal{E}}}
\newcommand{\vecU}{\mathbf{U}}
\newcommand{\vecF}{\mathbf{F}}
\newcommand{\matM}{\mathbf{M}}
\newcommand{\matS}{\mathbf{S}}
\newcommand{\epsmix}{\epsilon_{\text{mix}}}
\newcommand{\Dxi}{\Delta\xi}
\journalname{BIT}
\begin{document}
\title{Finite element approximation of
a time-fractional diffusion problem in
a non-convex polygonal domain\thanks{This work was supported by the 
Australian Research Council grant DP140101193.}}
\titlerunning{Fractional diffusion in a nonconvex domain}

\author{Kim Ngan Le \and William McLean \and Bishnu Lamichhane}
\institute{
Kim Ngan Le
\at School of Mathematics and Statistics, The University of New South 
Wales, Sydney 2052, AUSTRALIA\\
Tel.: +612-9385-7111\\
Fax.: +612-9385-7123\\
\email{n.le-kim@unsw.edu.au}
\and
William McLean
\at School of Mathematics and Statistics, The University of New South 
Wales, Sydney 2052, AUSTRALIA\\
Tel.: +612-9385-7111\\
Fax.: +612-9385-7123\\
\email{w.mclean@unsw.edu.au}
\and
Bishnu Lamichhane
\at School of Mathematics and Physical Sciences, University of 
Newcastle, Callaghan NSW 2308, AUSTRALIA\\
Tel.: +612-4921-5515\\
Fax.: +612-4921-6898\\
\email{blamichha@gmail.com}
}
\date{\today}
\maketitle
\begin{abstract}
An initial-boundary value problem for the time-fractional diffusion 
equation is discretized in space using continuous piecewise-linear 
finite elements on a polygonal domain with a re-entrant corner.  
Known error bounds for the case of a convex polygon break down 
because the associated Poisson equation is no longer $H^2$-regular.
In particular, the method is no longer second-order accurate if
quasi-uniform triangulations are used.  We prove that a suitable 
local mesh refinement about the re-entrant corner restores 
second-order convergence.  In this way, we generalize known results 
for the classical heat equation due to Chatzipantelidis, Lazarov,
Thom\'ee~and Wahlbin.
\keywords{local mesh refinement, non-smooth initial data, Laplace 
transformation.}
\subclass{
33E12, 
35D10, 
35R11, 
65N30, 
65N50. 
}
\end{abstract}

\section{Introduction}
In a standard model of subdiffusion~\cite{KlafterSokolov2011}, each
particle undergoes a continuous-time random walk with a common
waiting-time distribution that obeys a power law.  
Consequently, the mean-square displacement of a particle 
is proportional to~$t^\alpha$ with~$0<\alpha<1$, and the macroscopic 
concentration~$u(x,t)$ of the particles satisfies the time-fractional 
diffusion equation
\begin{equation}\label{eq: fpde}
\partial_t u-\partial_t^{1-\alpha}K\nabla^2u=f(x,t).
\end{equation}
Here, $\partial_t=\partial/\partial t$ and $\nabla^2$ denotes the 
spatial Laplacian. The fractional time derivative is of 
Riemann--Liouville type:
\[
\partial_t^{1-\alpha}v(x,t)=\frac{\partial}{\partial t}\int_0^t
	\omega_\alpha(t-s)v(x,s)\,ds,\qquad
\omega_\alpha(t)=\frac{t^{\alpha-1}}{\Gamma(\alpha)}
\quad\text{for~$t>0$.}
\]
If no sources or sinks are present, then the 
inhomogeneous term $f$ is identically zero.  We assume for simplicity 
that the generalized diffusivity~$K$ is a positive constant, and that 
the fractional PDE~\eqref{eq: fpde} holds for~$x$ in a polygonal 
domain~$\Omega\subseteq\R^2$ subject to homogeneous Dirichlet boundary 
conditions, with the initial condition
\begin{equation}\label{eq: ic}
u(x,0)=u_0(x)\quad\text{for $x\in\Omega$.}
\end{equation}
In the limiting case when~$\alpha\to1$, the fractional 
PDE~\eqref{eq: fpde} reduces to the classical heat equation that 
arises when the diffusing particles instead undergo Brownian motion.

Consider a spatial discretization of the preceding initial-boundary 
value problem using continuous piecewise-linear finite elements to 
obtain a semidiscrete solution~$u_h$. The behaviour of~$u_h$ is well 
understood if $\Omega$ is 
convex~\cite{JinLazarovZhou2013,McLeanThomee2010}: in this case, for 
general initial data~$u_0\in L_2(\Omega)$ and an appropriate choice 
of~$u_h(0)$,
\[
\|u_h(t)-u(t)\|\le Ct^{-\alpha}h^2\|u_0\|,\qquad0<t\le T,
\]
whereas for smoother initial data~$u_0\in H^2(\Omega)$,
\[
\|u_h(t)-u(t)\|\le Ch^2\|u_0\|_{H^2(\Omega)},\qquad0\le t\le T,
\]
where ~$\|\cdot\|=\|\cdot\|_{L_2(\Omega)}$. 
The error analysis establishing these bounds relies on the 
$H^2$-regularity property of the associated elliptic equation 
in~$\Omega$, namely, that if 
\begin{equation}\label{eq: elliptic bvp}
-K\nabla^2u=f\quad\text{in~$\Omega$,}\quad
\text{with $u=0$ on~$\partial\Omega$,}
\end{equation}
then $u\in H^2(\Omega)$ with~$\|u\|_{H^2(\Omega)}\le C\|f\|$.

In the present work, our aim is to study~$u_h$ in the case when 
$\Omega$ is not convex. Since the above $H^2$-regularity breaks down,
we can no longer expect $O(h^2)$ convergence if the finite element 
mesh is quasi-uniform. Our results generalize those of 
Chatzipantelidis, Lazarov, Thom\'ee and 
Wahlbin~\cite{ChatzipantelidisEtAl2006} for the heat equation (the 
limiting case~$\alpha=1$) to the fractional-order case ($0<\alpha<1$).
Our method of analysis relies on Laplace transformation, extending the 
approach of McLean and Thomee~\cite{McLeanThomee2010} for the 
fractional order problem on a convex domain.
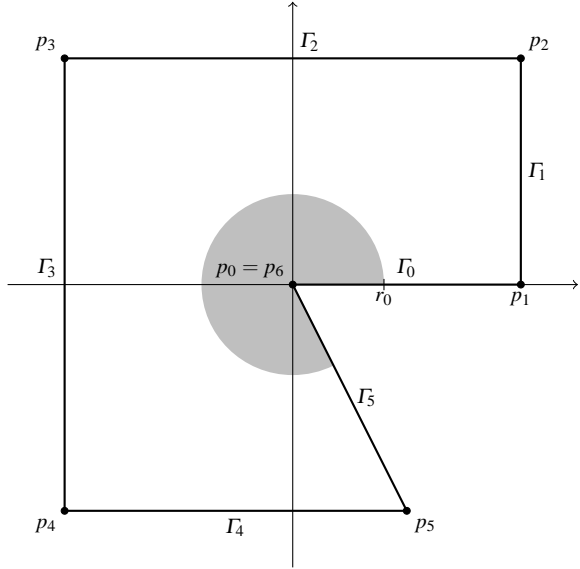
\begin{figure}
\begin{center}
\begin{tikzpicture}[scale=3.0]
\path[fill=gray!50] (0,0) -- (0.4,0) arc [radius=0.4, start angle=0, 
end 
angle=296.565] -- (0,0);
\draw[thick] (0,0) -- (1,0) -- (1,1) -- (-1,1) -- (-1,-1) 
          -- (0.5,-1) -- (0,0);
\draw[->] (-1.25,0) -- (1.25,0);
\draw[->] (0,-1.25) -- (0,1.25);
\draw (0.4,-0.025) -- (0.4,0.025);
\node[below] at (0.4,0) {$r_0$};
\newcommand*{\dotsz}{0.015}
\draw[fill] (0,0) circle [radius=\dotsz];
\node[above left] at (0,0) {$p_0=p_6$};
\draw[fill] (1,0) circle [radius=\dotsz];
\node[below] at (1,0) {$p_1$};
\draw[fill] (1,1) circle [radius=\dotsz];
\node[above right] at (1,1) {$p_2$};
\draw[fill] (-1,1) circle [radius=\dotsz];
\node[above left] at (-1,1) {$p_3$};
\draw[fill] (-1,-1) circle [radius=\dotsz];
\node[below left] at (-1,-1) {$p_4$};
\draw[fill] (0.5,-1) circle [radius=\dotsz];
\node[below right] at (0.5,-1) {$p_5$};
\node[above] at (0.5,0) {$\Gamma_0$};
\node[right] at (1,0.5) {$\Gamma_1$};
\node[above right] at (0,1) {$\Gamma_2$};
\node[above left] at (-1,0) {$\Gamma_3$};
\node[below] at (-0.25,-1) {$\Gamma_4$};
\node[right] at (0.25,-0.5) {$\Gamma_5$};
\end{tikzpicture}
\end{center}
\caption{A non-convex polygonal domain with the 
region~\eqref{eq: 0 nbhd} shaded.}
\label{fig: Omega}
\end{figure}

To focus on the essential difficulty, we 
assume that $\Omega$ has only a single re-entrant corner with 
angle~$\pi/\beta$ for~$1/2<\beta<1$.  Without loss of generality, 
we assume that this corner is located at the origin and that, for 
some~$r_0>0$, the intersection of~$\Omega$ with the open 
disk~$|x|<r_0$ is described in polar coordinates by 
\begin{equation}\label{eq: 0 nbhd}
0<r<r_0\quad\text{and}\quad 0<\theta<\pi/\beta, 
\end{equation}
as illustrated in  Figure~\ref{fig: Omega}. We denote the 
vertices of~$\Omega$ by $p_0=(0,0)$, $p_1$, $p_2$, \dots, 
$p_J=p_0$, and the $j$th side by
\[
\Gamma_j=(p_j,p_{j+1})
	=\{\,(1-\sigma)p_j+\sigma p_{j+1}: 0<\sigma<1\,\}
	\quad\text{for $0\le j\le J-1$.}
\]

Section~\ref{sec: elliptic} summarizes some key facts about the
singular behaviour of the solution to the elliptic 
problem~\eqref{eq: elliptic bvp}.  In 
Section~\ref{sec: FEM}, we describe a family of shape-regular 
triangulations~$\Tr_h$ (indexed by the mesh parameter~$h$) that 
depend on a local refinement parameter~$\gamma\ge1$.  The elements 
near the origin have sizes of order~$h^\gamma$, so the $\Tr_h$ are 
quasi-uniform if $\gamma=1$ but become more highly refined with 
increasing~$\gamma$.  Our error bounds will be stated in terms of the
quantity
\begin{equation}\label{eq: epsilon}
\epsilon(h,\gamma)=\begin{cases}
        
h^{\gamma\beta}/\sqrt{\gamma^{-1}-\beta},&1\le\gamma<1/\beta,\\
        h\sqrt{\log(1+h^{-1})},&\gamma=1/\beta,\\
        h/\sqrt{\beta-\gamma^{-1}},&\gamma>1/\beta,
\end{cases}
\end{equation}
which ranges in size from $O(h^\beta)$ when~$\gamma=1$ (the 
quasiuniform case) down to $O(h)$ when~$\gamma>1/\beta$.  We briefly 
review results for the finite element approximation of the elliptic 
problem, needed for our subsequent analysis: the error in 
$H^1(\Omega)$ is of order~$\epsilon(h,\gamma)$, and the error 
in~$L_2(\Omega)$ is of order~$\epsilon(h,\gamma)^2$.

Section~\ref{sec: time-dep} gathers together some pertinant facts 
about the solution of the time-dependent problem~\eqref{eq: fpde}
and its Laplace transform. Next, in Section~\ref{sec: semidiscrete}, 
we introduce the semidiscrete finite element solution~$u_h(t)$ of the 
time-dependent problem, and see that its stability properties mimic 
those of~$u(t)$. In Section~\ref{sec: error} we study first the 
homogeneous equation (i.e., the case~$f=0$), showing that the error 
in $L_2(\Omega)$ is of order $t^{-\alpha}\epsilon(h,\gamma)^2$ when 
$u_0\in L_2(\Omega)$.  For smoother initial data, the $L_2$-error is 
of order~$\epsilon(h,\gamma)^2$ uniformly for~$0\le t\le T$.  We also 
prove that for the inhomogeneous equation ($f\ne0$) with vanishing 
initial data ($u_0=0$), the error in~$L_2(\Omega)$ is of 
order~$t^{1-\alpha}\epsilon(h,\gamma)^2$.  Thus, by choosing the 
mesh refinement parameter~$\gamma>1/\beta$ we can restore 
second-order convergence in~$L_2(\Omega)$.  Section~\ref{sec: alt bc}
outlines briefly how these results are affected by different choices 
of the boundary conditions.  We conclude in Section~\ref{sec: 
numerical} with some numerical examples that illustrate our 
theoretical error bounds. 
\section{Singular behaviour in the elliptic problem}
\label{sec: elliptic}
In the weak formulation of the elliptic boundary-value 
problem~\eqref{eq: elliptic bvp} we introduce the Sobolev space
\[
V=\Htilde^1(\Omega)=H^1_0(\Omega)
\]
and seek~$u\in V$ satisfying
\[
a(u,v)=\iprod{f,v}\quad\text{for all $v\in V$,}
\]
where 
\begin{equation}\label{eq: a(u,v)}
a(u,v)=K\int_\Omega\nabla u\cdot\nabla v\,dx
\quad\text{and}\quad
\iprod{f,v}=\int_\Omega fv\,dx.
\end{equation}
Here, $f$ may belong to the dual space~$V^*=H^{-1}(\Omega)$ if 
$\iprod{f,v}$ is interpreted as the duality pairing on~$V^*\times V$.
Since $a(u,v)$ is bounded and coercive on~$\Htilde^1(\Omega)$, there 
exists a unique weak solution~$u$, and 
\begin{equation}\label{eq: Htilde u}
\|u\|_{\Htilde^1(\Omega)}\le C\|f\|_{H^{-1}(\Omega)}.
\end{equation}

To understand the difficulty created by the re-entrant corner, we 
separate variables in polar coordinates to construct the functions
\[
u_n^\pm(x)=r^{\pm n\beta}\sin(n\beta\theta)\quad
  \text{for $x=(r\cos\theta,r\sin\theta)$ and $n=1$, $2$, $3$, \dots,}
\]
satisfying 
\begin{equation}\label{eq: Laplacian in sector}
\nabla^2 u_n^\pm=0\quad
  \text{for $0<r<\infty$ and $0<\theta<\pi/\beta$,}
\end{equation}
with $u_n^\pm=0$ if $\theta=0$ or $\theta=\pi/\beta$.  Introducing a 
$C^\infty$ cutoff function~$\eta$ such that
\[
\eta(x)=1\quad\text{for $|x|\le r_0/2$}
\quad\text{and}\quad
\eta(x)=0\quad\text{for $|x|\ge r_0$,}
\]
we find that
\[
\eta u_n^+\in\Htilde^1(\Omega)
\quad\text{but}\quad
\eta u_n^-\notin\Htilde^1(\Omega)
\quad\text{for all $n\ge1$,}
\]
and that 
\[
\eta u_n^+\in H^2(\Omega)\quad\text{for all $n\ge2$,}
\quad\text{but}\quad
\eta u_1^+\notin H^2(\Omega).
\]
Now consider the function~$f=-K\nabla^2(\eta u_1^+)$.  The choice 
of~$\eta$ means that $f(x)=0$ for~$|x|\le r_0/2$, and consequently 
$f$ is $C^\infty$ on~$\overline{\Omega}$.  Nevertheless, the (unique 
weak) solution of~\eqref{eq: elliptic bvp}, namely $u=\eta u_1^+$,
fails to belong to~$H^2(\Omega)$.

Put $A=-K\nabla^2$ and
\begin{equation}\label{eq: V2 def}
V^2=H^2(\Omega)\cap\Htilde^1(\Omega)
	=\{\,v\in H^2(\Omega):\text{$v=0$ on $\partial\Omega$}\,\}.
\end{equation}

\begin{theorem}
The bounded linear operator defined by the restriction
\[
A|_{V^2}:V^2\to L_2(\Omega)
\]
is one-one and has closed range.
\end{theorem}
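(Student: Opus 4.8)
The plan is to treat injectivity and the closed-range property separately, using the coercivity bound~\eqref{eq: Htilde u} for the former and the corner-regularity structure of~$A^{-1}$ for the latter. Injectivity is immediate: if $v\in V^2$ and $Av=0$, then, since $v\in\Htilde^1(\Omega)$, I may test against~$v$ to obtain $0=\iprod{Av,v}=a(v,v)=K\|\nabla v\|^2$, and the Poincar\'e inequality on $\Htilde^1(\Omega)=H^1_0(\Omega)$ forces $v=0$.

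For the range, I would establish the \emph{a priori bound}
\[
\|v\|_{H^2(\Omega)}\le C\|Av\|\qquad\text{for all }v\in V^2,
\]
that is, that $A|_{V^2}$ is bounded below. Note first that $V^2$ is complete, being the kernel of the (continuous) trace map $H^2(\Omega)\to H^{3/2}(\partial\Omega)$ and hence a closed subspace of~$H^2(\Omega)$. Granting the bound, closedness of the range is routine: if $Av_n\to g$ in $L_2(\Omega)$, then $(v_n)$ is Cauchy in $H^2(\Omega)$, so $v_n\to v$ in~$V^2$ and $g=\lim Av_n=Av\in\Range(A|_{V^2})$. (The same estimate re-proves injectivity, so in fact the whole theorem reduces to this one bound.)

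To obtain the a priori bound I would invoke the singular decomposition of the solution operator near the re-entrant corner (the Kondrat'ev--Grisvard theory, consistent with the functions $u_n^\pm$ constructed above): for $f\in L_2(\Omega)$ the weak solution $u=A^{-1}f\in\Htilde^1(\Omega)$ splits as
\[
u=w+c(f)\,\eta u_1^+,\qquad w\in V^2,\quad \|w\|_{H^2(\Omega)}+|c(f)|\le C\|f\|,
\]
where $f\mapsto c(f)$ is a bounded linear functional (the stress-intensity coefficient) and $\eta u_1^+\notin H^2(\Omega)$ is the \emph{single} corner singularity --- single because $1/2<\beta<1$ gives $n\beta>1$ for every $n\ge2$, so that $\eta u_n^+\in H^2(\Omega)$ for all $n\ge2$. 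Now take $v\in V^2$ and set $f=Av\in L_2(\Omega)$. Since $v\in\Htilde^1(\Omega)$ solves $Av=f$, and $A$ is injective on $\Htilde^1(\Omega)$ by~\eqref{eq: Htilde u}, the weak solution must be $u=v$; comparing with the decomposition gives $c(f)\,\eta u_1^+=v-w\in H^2(\Omega)$, which forces $c(f)=0$ because $\eta u_1^+\notin H^2(\Omega)$. Hence $v=w$ and $\|v\|_{H^2(\Omega)}=\|w\|_{H^2(\Omega)}\le C\|f\|=C\|Av\|$, as required.

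The main obstacle is the decomposition itself: showing that an $L_2(\Omega)$ right-hand side produces a solution that is $H^2$-regular away from the corner and differs from an $H^2$ function only by a scalar multiple of~$\eta u_1^+$, together with the continuity of the coefficient functional~$c$. This is precisely the corner-regularity content of the problem; everything else is bookkeeping with~\eqref{eq: Htilde u} and with the properties of the $u_n^\pm$ already recorded. An equivalent packaging would identify $\Range(A|_{V^2})$ directly as $\Null(c)$, the kernel of a bounded functional and hence closed, but this relies on the same input, so I would prefer the bounded-below route since it delivers injectivity and closedness together.
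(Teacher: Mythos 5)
Your injectivity argument and your reduction of the closed-range property to the a priori bound $\|v\|_{H^2(\Omega)}\le C\|Av\|$ for $v\in V^2$ are both correct, and indeed more self-contained than the paper's treatment, which is a bare citation of Grisvard (Section 2.3). The problem is the load-bearing step. The singular decomposition you invoke --- $u=w+c(f)\,\eta u_1^+$ with $w\in V^2$ and $\|w\|_{H^2(\Omega)}+|c(f)|\le C\|f\|$, with $f\mapsto c(f)$ a bounded functional --- is precisely the paper's Theorem~\ref{thm: singular term}. In the paper, that theorem is \emph{deduced from} the theorem you are trying to prove: its proof uses the orthogonal splitting $L_2(\Omega)=\Range\oplus\Null$ with $\dim\Null=1$, and the fact that $f_1\perp\Null$ implies $u_1\in V^2$ with $\|u_1\|_{H^2(\Omega)}\le C\|f_1\|$; this last bound is exactly the bounded invertibility of $A|_{V^2}$ onto its closed range, i.e.\ the content of the present theorem combined with the open mapping theorem. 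So if the decomposition is taken from within the paper, your argument is circular.

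To break the circle you would have to obtain the decomposition independently, via the Kondrat'ev/Mellin-transform corner analysis --- which is in fact how the cited monograph develops the subject. That is a legitimate route, but it means your proof has not reduced the theorem to ``bookkeeping'': the existence of the splitting, the membership $w\in V^2$, and above all the continuity of the coefficient functional $c$ constitute the entire analytic difficulty, and are of the same depth as the result being proved. You acknowledge this honestly (``the main obstacle''), but as written the proposal exchanges one deep citation for another rather than supplying the missing analysis; and because your direction of reduction (decomposition $\Rightarrow$ closed range) is the reverse of the paper's (closed range $+$ orthogonal splitting $\Rightarrow$ decomposition), the two arguments cannot be spliced together without circularity. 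A complete writeup would either carry out the Mellin analysis near the corner or follow the paper in citing it, in which case the theorem might as well be cited directly.
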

\begin{proof}
See Grisard~\cite[Section~2.3]{Grisvard1992}.
\qed\end{proof}

Our task now is to identify the orthogonal complement 
in~$L_2(\Omega)$ of the range
\[
\Range=\{\,f\in L_2(\Omega):
	\text{$f=Au$ for some $u\in V^2$}\,\}.
\]
To this end, we define in the usual way the Hilbert space
\[
L_2(\Omega,A)=\{\,\phi\in L_2(\Omega): A\phi\in L_2(\Omega)\,\}
\]
with the graph norm~$\|\phi\|_{L_2(\Omega,A)}^2
=\|\phi\|^2+\|A\phi\|^2$.
Let $\partial_n$ denote the outward normal derivative operator.  
It can be shown that the trace map 
$\phi\mapsto\bigl(\phi|_{\Gamma_j},\partial_n\phi|_{\Gamma_j}\bigr)$,
has unique extensions from~$C^1(\overline{\Omega})$ to bounded 
linear operators~\cite[Theorems 1.4.2~and 1.5.2]{Grisvard1992}
\[
H^2(\Omega)\to H^{3/2}(\Gamma_j)\times H^{1/2}(\Gamma_j)
\quad\text{and}\quad
L_2(\Omega,A)\to
\Htilde^{-1/2}(\Gamma_j)\times\Htilde^{-3/2}(\Gamma_j),
\]
and that the second Green identity holds in the 
form~\cite[Theorem~1.5.3]{Grisvard1992}
\[
\int_\Omega\bigl[(Au)v-u(Av)\bigr]\,dx=\sum_{j=0}^{J-1}
	K\bigl[\iprod{u,\partial_nv}_{\Gamma_j}
		-\iprod{\partial_nu,v}_{\Gamma_j}\bigr]
\]
for $u\in H^2(\Omega)$ and $v\in L_2(\Omega,A)$. Hence,
\[
\iprod{Au,\phi}=\iprod{u,A\phi}\quad
	\text{if $u\in V^2$, $\phi\in L_2(\Omega,A)$ and
	$\phi|_{\Gamma_j}=0$ for all $j$,}
\]
implying that $\Range$ is orthogonal in~$L_2(\Omega)$ to the closed 
subspace
\[
\Null=\{\,\phi\in L_2(\Omega,A):
	\text{$A\phi=0$ in $\Omega$, 
	and $\phi|_{\Gamma_j}=0$ for every~$j$}\,\}.
\]
Notice that $\Null\cap\Htilde^1(\Omega)=\{0\}$ because if~$f=0$ 
then the unique weak solution of~\eqref{eq: elliptic bvp} 
in~$\Htilde^1(\Omega)$ is $u=0$.

\begin{theorem}
The Hilbert space~$L_2(\Omega)$ is the orthogonal direct sum of 
$\Range$~and $\Null$, and $\dim\Null=1$ (assuming $\Omega$ has only a 
single re-entrant corner).
\end{theorem}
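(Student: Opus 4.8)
The plan is to lean on the first theorem, which tells us that $\Range$ (the range of $A|_{V^2}$) is closed, together with the orthogonality $\Null\subseteq\Range^\perp$ already established above. Closedness of $\Range$ gives the orthogonal decomposition $L_2(\Omega)=\Range\oplus\Range^\perp$, so the whole statement reduces to proving the reverse inclusion $\Range^\perp\subseteq\Null$ and then computing $\dim\Null=1$: the former yields $\Range^\perp=\Null$, hence the asserted orthogonal direct sum, and the latter supplies the dimension.

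First I would prove $\Range^\perp\subseteq\Null$. Let $\phi\perp\Range$, i.e.\ $\iprod{Au,\phi}=0$ for every $u\in V^2$. Testing against $u\in C_c^\infty(\Omega)\subseteq V^2$ shows that $\nabla^2\phi=0$ in the distributional sense, so $A\phi=0$ and in particular $\phi\in L_2(\Omega,A)$. Substituting $A\phi=0$ and the homogeneous Dirichlet trace $u|_{\Gamma_j}=0$ (valid because $u\in V^2\subseteq\Htilde^1(\Omega)$) into the second Green identity recorded above collapses it to $\sum_j\iprod{\partial_n u,\phi}_{\Gamma_j}=0$ for all $u\in V^2$. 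Since the normal-trace map $u\mapsto(\partial_n u|_{\Gamma_j})_j$ sends $V^2$ onto a dense subspace of $\prod_j H^{1/2}(\Gamma_j)$, and $\phi|_{\Gamma_j}\in\Htilde^{-1/2}(\Gamma_j)=\bigl(H^{1/2}(\Gamma_j)\bigr)^*$, this forces $\phi|_{\Gamma_j}=0$ for every $j$, whence $\phi\in\Null$.

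For $\dim\Null=1$ I would establish the two inequalities separately. To get $\dim\Null\ge1$, construct an explicit nonzero element: set $\phi=\eta u_1^- - w$, where $w\in\Htilde^1(\Omega)$ is the unique weak solution of $a(w,v)=\iprod{A(\eta u_1^-),v}$ for all $v\in\Htilde^1(\Omega)$. Because $\eta\equiv1$ near the origin and $u_1^-$ is harmonic, $A(\eta u_1^-)$ is supported in the annulus $r_0/2\le|x|\le r_0$, where $u_1^-$ is smooth; thus $A(\eta u_1^-)\in L_2(\Omega)$ and Lax--Milgram applies. Then $A\phi=0$, while $\eta u_1^-$ and $w$ each vanish on every $\Gamma_j$, so $\phi\in\Null$; and since $w\in\Htilde^1(\Omega)$ but $\eta u_1^-\notin\Htilde^1(\Omega)$, we have $\phi\notin\Htilde^1(\Omega)$ and in particular $\phi\ne0$. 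To get $\dim\Null\le1$, I would separate variables near the corner: any $\phi\in\Null$ is harmonic on the sector and vanishes on $\theta=0$ and $\theta=\pi/\beta$, so it expands as $\sum_{n\ge1}\bigl(a_n r^{n\beta}+b_n r^{-n\beta}\bigr)\sin(n\beta\theta)$. Square-integrability near the origin forces $b_n=0$ whenever $n\beta\ge1$, and since $1/2<\beta<1$ only the mode $n=1$ survives. The linear map $\phi\mapsto b_1$ from $\Null$ to $\R$ then has trivial kernel: if $b_1=0$ the remaining expansion is $H^1$ near the corner, so $\phi\in\Htilde^1(\Omega)$ and hence $\phi\in\Null\cap\Htilde^1(\Omega)=\{0\}$. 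Thus the map is injective and $\dim\Null\le1$.

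The main obstacle I anticipate lies in the two elliptic-regularity ingredients. In the second step I must justify that the normal-trace map is onto a dense enough subspace of $\prod_j H^{1/2}(\Gamma_j)$ to separate the boundary data $\phi|_{\Gamma_j}\in\Htilde^{-1/2}(\Gamma_j)$; in the last step, the delicate point is the assertion that an element of $\Null$ whose $n=1$ singular coefficient vanishes is genuinely $H^1$ near the corner. Both rest on the sector expansion of solutions of $\nabla^2\phi=0$ and on the precise mapping properties of the traces between $H^2(\Omega)$, $L_2(\Omega,A)$ and the fractional spaces $H^{\pm1/2}(\Gamma_j)$, $\Htilde^{\mp1/2}(\Gamma_j)$ quoted above; I would lean on Grisvard's results for these.
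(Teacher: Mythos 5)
Your proposal is correct in its essential structure, but be aware that it is not a variant of the paper's argument, because the paper gives no argument: its entire proof is the citation to Grisvard~\cite[Theorem~2.3.7]{Grisvard1992}. What you have written is in effect a self-contained reconstruction of that cited result, and the skeleton is the right one. Closedness of $\Range$ (the paper's first theorem) reduces everything to $\Range^\perp\subseteq\Null$ plus the dimension count; testing orthogonality against $C_c^\infty(\Omega)$ gives $A\phi=0$, and the second Green identity plus edge-wise density of normal traces kills $\phi|_{\Gamma_j}$; your function $\eta u_1^--w$ is a genuine nonzero element of $\Null$ (it is, up to the normalization $\iprod{A(\eta u_1^+),q}=1$, exactly the $q$ used in Theorem~\ref{thm: singular term}); and injectivity of $\phi\mapsto b_1$ follows from $\Null\cap\Htilde^1(\Omega)=\{0\}$, which the paper records just before the theorem. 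What the citation buys the paper is brevity; what your reconstruction buys is that it shows precisely where the hypotheses enter: $\dim\Null=1$ because, for a single re-entrant corner with $1/2<\beta<1$, the mode $n=1$ is the only one whose negative branch $r^{-n\beta}\sin(n\beta\theta)$ is in $L_2$ but not in $H^1$. The technical points you flag are the right ones, and all are fillable: for the density step, it is cleanest to take $u$ smooth and supported near a compact subset of a single open edge, so that $\partial_nu|_{\Gamma_j}$ ranges over $C_c^\infty(\Gamma_j)$ while the normal traces on the other edges vanish, and then invoke density of $C_c^\infty(\Gamma_j)$ in $H^{1/2}(\Gamma_j)$ (true precisely because the order does not exceed $1/2$); for the $b_1=0$ step, $H^1$ membership near the corner follows from Parseval at radius $r_0$ together with the geometric factor gained by restricting to $r<r_0/2$, which dominates the loss from differentiating term by term; you should also dispose of the convex corners $p_1,\dots,p_{J-1}$, where the analogous expansion has exponents of absolute value greater than~$1$, so no negative branch is square-integrable and $\phi$ is automatically $H^1$ there; and in two places (the trace of $w$ in step three, and the passage from ``$\phi\in H^1$ with vanishing weak trace'' to $\phi\in\Htilde^1(\Omega)$ in step four) you implicitly use that the weak traces on $L_2(\Omega,A)$ are consistent with the classical $H^1$ traces, which is part of the Grisvard trace theory you cite.
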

\begin{proof}
See \cite[Theorem~2.3.7]{Grisvard1992}.
\qed\end{proof}

Thus, given any $f\in L_2(\Omega)$, the unique weak 
solution~$u\in\Htilde^1(\Omega)$ of~\eqref{eq: elliptic bvp} belongs 
to~$H^2(\Omega)$ if and only if $f\perp\Null$.  For general~$f$, the 
following holds.

\begin{theorem}\label{thm: singular term}
There exists $q\in\Null$ (depending only on $\Omega$~and $\eta$)
such that if $f\in L_2(\Omega)$ then the weak solution~$u$ 
of~\eqref{eq: elliptic bvp} satisfies 
$u-\iprod{f,q}\eta u_1^+\in V^2$ with
\[
\bigl\|u-\iprod{f,q}\eta u_1^+\bigr\|_{H^2(\Omega)}\le C\|f\|.
\]
\end{theorem}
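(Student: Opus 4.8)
The plan is to exploit the orthogonal decomposition $L_2(\Omega)=\Range\oplus\Null$ together with the invertibility of $A|_{V^2}$ to split off a single fixed singular function. Since $\dim\Null=1$, fix a generator $q$ of $\Null$, and set $g_0=A(\eta u_1^+)$; recall from the discussion preceding Theorem~\ref{thm: singular term} that $g_0$ is $C^\infty$ on $\overline{\Omega}$ and that $\eta u_1^+$ is the (unique) weak solution of~\eqref{eq: elliptic bvp} with right-hand side $g_0$. Because $\eta u_1^+\notin V^2$, this $g_0$ cannot lie in $\Range=\Null^\perp$ (otherwise $g_0=Av$ for some $v\in V^2$, and uniqueness of weak solutions would force $v=\eta u_1^+\in V^2$), so $\iprod{g_0,q}\ne0$. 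I would therefore normalize $q$ by imposing $\iprod{g_0,q}=1$. As both $\Null$ and $g_0$ are determined by $\Omega$~and $\eta$ alone, the resulting $q$ depends only on $\Omega$~and $\eta$, as claimed.

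Given $f\in L_2(\Omega)$ with weak solution $u$, I would set $c=\iprod{f,q}$ and form $w=u-c\,\eta u_1^+\in\Htilde^1(\Omega)$ and $\tilde f=f-c\,g_0$. By linearity of the weak formulation, and since $\eta u_1^+$ solves~\eqref{eq: elliptic bvp} with data $g_0$, the function $w$ is the weak solution with right-hand side $\tilde f$. The decisive computation is then $\iprod{\tilde f,q}=\iprod{f,q}-c\,\iprod{g_0,q}=c-c=0$, so $\tilde f\perp\Null$ and hence $\tilde f\in\Range$. By the very definition of $\Range$, the weak solution associated with $\tilde f$ lies in $V^2$; by uniqueness this solution is $w$, so indeed $w=u-\iprod{f,q}\,\eta u_1^+\in V^2$.

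For the a priori bound I would invoke the preceding theorem: $A|_{V^2}:V^2\to L_2(\Omega)$ is one-one with closed range $\Range$, so by the bounded inverse theorem its inverse is continuous, which gives $\|w\|_{H^2(\Omega)}\le C\|Aw\|=C\|\tilde f\|$. Finally, $\|\tilde f\|\le\|f\|+|c|\,\|g_0\|\le\bigl(1+\|q\|\,\|g_0\|\bigr)\|f\|$ by Cauchy--Schwarz, which closes the estimate with a constant $C$ depending only on $\Omega$~and $\eta$.

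The individual manipulations are routine; the one step that genuinely relies on the structural results already established is the normalization of $q$, and in particular the non-degeneracy $\iprod{g_0,q}\ne0$, which uses both that $\dim\Null=1$ and that $\eta u_1^+$ really does fail to belong to $H^2(\Omega)$. I expect no further obstacle, since the regularity estimate follows purely from the closed-range property and the open mapping theorem rather than from any delicate analysis near the re-entrant corner.
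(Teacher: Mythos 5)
Your proposal is correct and follows essentially the same route as the paper's own proof: normalize $q\in\Null$ so that $\iprod{A(\eta u_1^+),q}=1$, subtract $\iprod{f,q}\,\eta u_1^+$ so the residual data is orthogonal to $\Null$, and invoke the closed-range/orthogonal-decomposition results to get membership in $V^2$ and the $H^2$ bound. The only difference is that you spell out steps the paper leaves terse --- the non-degeneracy $\iprod{A(\eta u_1^+),q}\ne0$ via uniqueness of weak solutions, and the a priori estimate via the bounded inverse theorem --- which are exactly the justifications implicit in the paper's argument.
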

\begin{proof}
Choose any nonzero~$\phi\in\Null$. Since 
$\eta u_1^+\in L_2(\Omega, A)$ but $\eta u_1^+\notin V^2$, we have 
$\iprod{A(\eta u_1^+),\phi}\ne0$ and may therefore define
$q=c\phi\in\Null$ by letting 
$c=1/\iprod{A(\eta u_1^+),\phi}$, so that 
$\iprod{A(\eta u_1^+),q}=1$.  Define
\[
u_1=u-\iprod{f,q}\eta u_1^+\in\Htilde^1(\Omega),
\]
and observe that $u_1$ satisfies $Au_1=f_1$ where 
$f_1=f-\iprod{f,q}A(\eta u_1^+)$.
Since $\iprod{f_1,q}=0$, the we deduce that $u_1\in H^2(\Omega)$ and 
\[
\|u_1\|_{H^2(\Omega)}\le C\|f_1\|\le C\|f\|+C|\iprod{f,q}|\le C\|f\|,
\]
because $A(\eta u_1^+)\in C^\infty(\overline{\Omega})$~and
$q\in L_2(\Omega)$.
\qed\end{proof}
\section{Finite element approximation}\label{sec: FEM}

Consider a family~$\Tr_h$ of shape-regular triangulations of~$\Omega$,
indexed by the maximum element diameter~$h$.  
For each element~$\triangle\in\Tr_h$, let
\[
h_\triangle=\diam(\triangle)\quad\text{and}\quad
r_\triangle=\dist(0,\triangle),
\]
and suppose that for some~$\gamma\ge1$,
\begin{equation}\label{eq: h gamma}
chr_\triangle^{1-1/\gamma}\le h_\triangle
	\le Chr_\triangle^{1-1/\gamma}
	\quad\text{whenever $h^\gamma\le r_\triangle\le1$,}
\end{equation}
with
\begin{equation}\label{eq: h gamma 0}
ch^\gamma\le h_\triangle\le Ch^\gamma
	\quad\text{whenever $r_\triangle\le h^\gamma$.}
\end{equation}
Thus, if $\gamma=1$ then the mesh is globally quasiuniform, but 
for~$\gamma>1$ the element diameter decreases from order~$h$,
when~$r_\triangle\ge1$, to order~$h^\gamma$, when~$r_\triangle\le 
h^\gamma$.  Such triangulations are widely used for elliptic problems 
on domains with re-entrant corners; see for instance Apel et 
al.~\cite[Section~3]{ApelSaendigWhiteman1996}.

For each triangulation~$\Tr_h$, we let $V_h$ denote the corresponding 
space of continuous piecewise-linear functions that vanish 
on~$\partial\Omega$, so that $V_h\subseteq V=\Htilde^1(\Omega)$.  
Since the bilinear form~\eqref{eq: a(u,v)} is bounded and coercive 
on~$V$, there exists a unique finite element solution~$u_h\in V_h$ 
defined by
\begin{equation}\label{eq: uh weak}
a(u_h,v)=\iprod{f,v}\quad\text{for all $v\in V_h$.}
\end{equation}
This solution is stable in~$\Htilde^1(\Omega)$,
\begin{equation}\label{eq: Htilde uh}
\|u_h\|_{\Htilde^1(\Omega)}\le C\|f\|_{H^{-1}(\Omega)},
\end{equation}
and satisfies the quasi-optimal error bound 
\begin{equation}\label{eq: quasioptimal}
\|u_h-u\|_{\Htilde^1(\Omega)}
	\le C\min_{v\in V_h}\|v-u\|_{\Htilde^1(\Omega)}.
\end{equation}
Let $\Pi_h:C(\overline{\Omega})\to V_h$ denote the nodal 
interpolation operator, define the seminorm
\[
|v|_{m,\Omega}=\biggl(\sum_{j_1+j_2=m}\int_\Omega
	|\partial^jv(x)|^2\,dx\biggr)^{1/2},
\]
where $\partial^j=\partial_{x_1}^{j_1}\partial_{x_2}^{j_2}$,
and recall the standard interpolation error bounds~\cite{Ciarlet2002}
\begin{equation}\label{eq: Pi_h}
|v-\Pi_hv|_{m,\triangle}\le Ch_\triangle^{2-m}|v|_{2,\triangle},
\qquad m\in\{0,1\}.
\end{equation}
The next theorem reflects the influence of the singular behaviour 
of~$u$ and the local mesh refinement parameter~$\gamma$ on the 
accuracy of the approximation~$u\approx\Pi_h u$.  

\begin{theorem}\label{thm: Pi_h}
If $f\in L_2(\Omega)$ then the solution~$u\in V$ 
of the elliptic problem~\eqref{eq: elliptic bvp} satisfies
\[
\|u-\Pi_h u\|\le Ch\epsilon(\gamma,h)\|f\|
\quad\text{and}\quad
\|u-\Pi_h u\|_{\Htilde^1(\Omega)}\le C\epsilon(\gamma,h)\|f\|,
\]
where $\epsilon(h,\gamma)$ is given by~\eqref{eq: epsilon}.
\end{theorem}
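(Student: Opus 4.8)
The plan is to exploit the regular/singular splitting provided by Theorem~\ref{thm: singular term}. Setting $s=\eta u_1^+$ and $u_1=u-\iprod{f,q}s$, we have $u_1\in V^2$ with $\|u_1\|_{H^2(\Omega)}\le C\|f\|$, while $q\in L_2(\Omega)$ gives $|\iprod{f,q}|\le C\|f\|$. Since $\Pi_h$ is linear it is enough to bound the interpolation errors of $u_1$ and of $s$ separately in the $L_2$- and $\Htilde^1$-norms. The regular part is immediate: the grading conditions~\eqref{eq: h gamma}--\eqref{eq: h gamma 0} together with $\gamma\ge1$ and $r_\triangle\le\diam(\Omega)$ yield $h_\triangle\le Ch$ for every element, so summing~\eqref{eq: Pi_h} gives $\|u_1-\Pi_h u_1\|_{m,\Omega}\le Ch^{2-m}|u_1|_{2,\Omega}\le Ch^{2-m}\|f\|$ for $m\in\{0,1\}$. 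As $\epsilon(h,\gamma)\ge ch$ in each regime of~\eqref{eq: epsilon}, this is dominated by the asserted bound.

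The substance of the proof is the singular term $s$, which equals $r^\beta\sin(\beta\theta)$ for $|x|\le r_0/2$, so that $|\nabla^2 s|\le Cr^{\beta-2}$ near the origin while $s$ is smooth for $r\ge r_0/2$. I would split $\Tr_h$ into corner elements ($r_\triangle\le h^\gamma$), graded elements ($h^\gamma\le r_\triangle\le r_0/2$), and the remaining elements, on which $s$ is smooth and which are treated exactly as $u_1$. On a graded element the grading law reads $h_\triangle\sim hr_\triangle^{1-1/\gamma}$, and because $h^\gamma\le r_\triangle$ forces $h_\triangle\le Cr_\triangle$, shape regularity guarantees $r\sim r_\triangle$ throughout $\triangle$. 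Inserting $h_\triangle\sim hr^{1-1/\gamma}$ and $|\nabla^2 s|\sim r^{\beta-2}$ into~\eqref{eq: Pi_h} with $m=1$ and reassembling the element integrals into one integral over the graded annulus gives
\[
\sum_{\triangle\text{ graded}}|s-\Pi_h s|_{1,\triangle}^2
\le Ch^2\int_{h^\gamma}^{r_0/2}r^{2\beta-1-2/\gamma}\,dr.
\]

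Everything now turns on this one radial integral, whose behaviour is governed by the sign of $2\beta-2/\gamma$; this is exactly where the threshold $\gamma=1/\beta$ enters. For $\gamma>1/\beta$ the integral is bounded by $C(\beta-\gamma^{-1})^{-1}$; for $\gamma<1/\beta$ it is dominated by its lower limit and is of order $h^{2\gamma\beta-2}(\gamma^{-1}-\beta)^{-1}$; and for $\gamma=1/\beta$ the integrand is $r^{-1}$, producing a factor $C\log(1+h^{-1})$. Multiplying by the prefactor $h^2$ reproduces the three cases of $\epsilon(h,\gamma)^2$ exactly, so $\|s-\Pi_h s\|_{\Htilde^1(\Omega)}\le C\epsilon(h,\gamma)$. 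For the $O(1)$ corner elements, where $s\notin H^2$ and~\eqref{eq: Pi_h} is unavailable, I would bound $\nabla s$ and $\nabla\Pi_h s$ directly: each nodal value of $s$ is $O(h^{\gamma\beta})$ and each element has diameter $O(h^\gamma)$, whence $|s-\Pi_h s|_{1,\triangle}^2\le Ch^{2\gamma\beta}$, which is absorbed into $C\epsilon(h,\gamma)^2$ since $\epsilon(h,\gamma)^2\ge ch^{2\gamma\beta}$ in every regime.

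The $L_2$ estimate then follows from the $H^1$ one at no extra cost: the bound~\eqref{eq: Pi_h} with $m=0$ carries an additional factor $h_\triangle^2\le Ch^2$, so $\|s-\Pi_h s\|^2\le Ch^2\sum_\triangle h_\triangle^2|s|_{2,\triangle}^2\le Ch^2\epsilon(h,\gamma)^2$, while the corner contribution $Ch^{2\gamma\beta+2\gamma}\le Ch^2\epsilon(h,\gamma)^2$ is likewise controlled. I expect the main obstacle to lie not in any conceptual step but in the passage from the element sum to the radial integral and the careful endpoint analysis across the three regimes---in particular tracking the constants $(\gamma^{-1}-\beta)^{-1/2}$, $(\beta-\gamma^{-1})^{-1/2}$ and the logarithm at the critical grading $\gamma=1/\beta$---while the splitting and the standard bounds~\eqref{eq: Pi_h} carry the structural load.
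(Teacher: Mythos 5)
Your proposal is correct and follows essentially the same route as the paper's proof: the singular/regular splitting via Theorem~\ref{thm: singular term}, the three-way partition of $\Tr_h$ into corner, graded, and far elements, and the reduction to the radial integral $\int_{h^\gamma} r^{2\beta-1-2/\gamma}\,dr$ whose three regimes produce~\eqref{eq: epsilon}. The only differences are cosmetic bookkeeping (a direct nodal-value gradient bound on corner elements where the paper uses an inverse estimate, and pulling out a uniform factor $h^2$ for the $L_2$ bound where the paper keeps the local weight $h_\triangle^2$ inside the integral).
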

\begin{proof}
We use Theorem~\ref{thm: singular term} to split~$u$ into singular 
and regular parts,
\[
u=\usng+\ureg,\qquad \usng=\iprod{f,q}\eta u_1^+,\qquad 
	\ureg\in H^2(\Omega),
\]
with~$\|\ureg\|_{H^2(\Omega)}\le C\|f\|$, leading to a corresponding 
decomposition of the interpolation error,
\[
u-\Pi_h u=(\usng-\Pi_h\usng)+(\ureg-\Pi_h\ureg).
\]
We see from~\eqref{eq: Pi_h} that
\[
\|\ureg-\Pi_h\ureg\|\le Ch^2|\ureg|_{2,\Omega}
	\le Ch^2\|f\|\le Ch\epsilon(h,\gamma)\|f\|
\]
and
\[
|\ureg-\Pi_h\ureg|_{1,\Omega}\le Ch|\ureg|_{2,\Omega}
	\le Ch\|f\|\le C\epsilon(h,\gamma)\|f\|,
\]
so it suffices to consider $\usng-\Pi_h\usng$.  Note that
$|\partial^j\usng(x)|\le C\|f\||x|^{\beta-|j|}$ for any 
multi-index~$j$,
because $u_1^+$ is homogeneous of degree~$\beta$.

We partition the triangulation into three subsets,
\begin{gather*}
\Tr_h^1=\{\,\triangle\in\Tr_h:r_\triangle<h^\gamma\,\},\qquad
\Tr_h^2=\{\,\triangle\in\Tr_h:h^\gamma\le r_\triangle<1\,\},\\
\Tr_h^3=\{\,\triangle\in\Tr_h:r_\triangle\ge1\,\},
\end{gather*}
and write
\[
|\usng-\Pi_h\usng|_{1,\Omega}^2=S_1+S_2+S_3
\quad\text{where}\quad
S_p=\sum_{\triangle\in\Tr_h^p}|\usng-\Pi_h\usng|_{1,\triangle}^2.
\]
If $r_\triangle<h^\gamma$, then
$|\usng-\Pi_h\usng|_{1,\triangle}\le|\usng|_{1,\triangle}+
|\Pi_h\usng|_{1,\triangle}$, and we estimate separately
\[
|\usng|_{1,\triangle}^2\le C\|f\|^2\int_\triangle|x|^{2(\beta-1)}\,dx
\]
and, using \eqref{eq: h gamma 0},
\[
|\Pi_h\usng|_{1,\triangle}^2
	\le Ch_\triangle^{-2}|\Pi_hu_s|_{0,\triangle}^2
	\le Ch^{-2\gamma}\|f\|^2\int_\triangle|x|^{2\beta}\,dx.
\]
Since $|x|\le r_\triangle+h_\triangle\le Ch^\gamma$ 
for~$x\in\triangle$, we see that
\begin{equation}\label{eq: S1}
S_1\le C\|f\|^2\int_{|x|\le Ch^\gamma}|x|^{2(\beta-1)}\,dx
	+Ch^{-2\gamma}\|f\|^2\int_{|x|\le Ch^\gamma}|x|^{2\beta}\,dx
	\le Ch^{2\gamma\beta}\|f\|^2.
\end{equation}
If $h^\gamma\le r_\triangle<1$, then \eqref{eq: Pi_h} gives
\[
|\usng-\Pi_h\usng|_{1,\triangle}^2\le Ch_\triangle^2\|f\|^2
	\int_\triangle|x|^{2(\beta-2)}\,dx,
\]
and our assumption~\eqref{eq: h gamma} on the mesh implies that 
for~$x\in\triangle$,
\[
h_\triangle|x|^{\beta-2}
	\le Chr_\triangle^{1-1/\gamma}|x|^{\beta-2}
	=Ch\biggl(\frac{r_\triangle}{|x|}\biggr)^{1-1/\gamma}
	|x|^{\beta-1-1/\gamma}
	\le Ch|x|^{\beta-1-1/\gamma}
\]
so
\begin{equation}\label{eq: S2}
\begin{aligned}
S_2&\le Ch^2\|f\|^2\int_{h^\gamma\le|x|\le 1+h}
		|x|^{2(\beta-1-1/\gamma)}\,dx\\
	&\le Ch^2\|f\|^2\int_{h^\gamma}^{1+h}r^{2(\beta-1/\gamma)-1}\,dr
	\le C\epsilon(h,\gamma)^2\|f\|^2.
\end{aligned}
\end{equation}
In the remaining case~$r_\triangle\ge1$, putting
$R=\sup\{\,|x|:x\in\Omega\,\}$ we have
$1\le|x|\le R$ for $x\in\triangle$, and thus
\begin{equation}\label{eq: S3}
S_3\le\sum_{\triangle\in\Tr_h^3} 
Ch_\triangle^2\|f\|^2\int_\triangle\,dx 
	\le Ch^2\|f\|^2\int_{1\le|x|\le R}\,dx\le Ch^2\|f\|^2.
\end{equation}
Together, \eqref{eq: S1}--\eqref{eq: S3} show that
$|\usng-\Pi_h\usng|_{1,\Omega}\le C\epsilon(h,\gamma)\|f\|$.

A similar argument shows
\[
\sum_{\triangle\in\Tr_h^1}|\usng-I_h\usng|_{0,\triangle}^2
	\le C\|f\|^2\int_0^{Ch^\gamma}h^{2\gamma\beta}r\,dr
	\le Ch^{2\gamma(\beta+1)}\|f\|^2
\]
and
\begin{align*}
\sum_{\triangle\in\Tr_h^2\cup\Tr_h^3}
	|\usng-I_h\usng|_{0,\triangle}^2
	&\le Ch^4\|f\|^2\int_{h^\gamma}^{R}
		r^{2(\beta-1/\gamma)-1}r^{2(1-1/\gamma)}\,dr
	\le Ch^2\epsilon(h,\gamma)^2\|f\|^2.
\end{align*}
Hence, $\|\usng-\Pi_h\usng\|\le Ch\epsilon(h,\gamma)\|f\|$ and the 
desired bounds follow.
\qed\end{proof}

\begin{theorem}\label{thm: elliptic error}
If $f\in L_2(\Omega)$ then the finite element solution~$u_h\in V_h$ 
of the elliptic problem~\eqref{eq: elliptic bvp} satisfies
\[
\|u_h-u\|\le C\epsilon(\gamma,h)^2\|f\|
\quad\text{and}\quad
\|u_h-u\|_{\Htilde^1(\Omega)}\le C\epsilon(\gamma,h)\|f\|,
\]
where $\epsilon(h,\gamma)$ is given by~\eqref{eq: epsilon}.
\end{theorem}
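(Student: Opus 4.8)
The plan is to obtain the energy-norm bound directly from the quasi-optimality estimate~\eqref{eq: quasioptimal} together with the interpolation bound of Theorem~\ref{thm: Pi_h}, and then to deduce the sharper $L_2$-bound by a duality (Aubin--Nitsche) argument. For the energy-norm estimate I would simply use $v=\Pi_h u$ as a competitor in~\eqref{eq: quasioptimal}:
\[
\|u_h-u\|_{\Htilde^1(\Omega)}
\le C\min_{v\in V_h}\|v-u\|_{\Htilde^1(\Omega)}
\le C\|u-\Pi_h u\|_{\Htilde^1(\Omega)}
\le C\epsilon(h,\gamma)\|f\|,
\]
the last step being Theorem~\ref{thm: Pi_h}. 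This disposes of the second claimed bound.

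For the $L_2$-estimate, write $e=u_h-u$ and introduce the solution $w\in V$ of the dual problem (which coincides with the primal one, since $a$ is symmetric) with right-hand side $e$, namely $a(w,v)=\iprod{e,v}$ for all $v\in V$. Taking $v=e$ and using symmetry gives $\|e\|^2=\iprod{e,e}=a(w,e)=a(e,w)$. Galerkin orthogonality, $a(e,v)=0$ for all $v\in V_h$, then lets me subtract the nodal interpolant of $w$:
\[
\|e\|^2=a(e,w-\Pi_h w)
\le C\|e\|_{\Htilde^1(\Omega)}\,\|w-\Pi_h w\|_{\Htilde^1(\Omega)}.
\]
Since $w$ solves an elliptic problem of exactly the form~\eqref{eq: elliptic bvp} with $L_2$ data $e$, Theorem~\ref{thm: Pi_h} applies verbatim to $w$ and yields $\|w-\Pi_h w\|_{\Htilde^1(\Omega)}\le C\epsilon(h,\gamma)\|e\|$. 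Substituting and cancelling one factor of $\|e\|$ (the case $e=0$ being trivial) gives
\[
\|e\|\le C\epsilon(h,\gamma)\|e\|_{\Htilde^1(\Omega)}
\le C\epsilon(h,\gamma)^2\|f\|,
\]
where the final inequality is the energy-norm bound just established.

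The conceptual heart of the argument—and the only place requiring care—is that the dual solution $w$ is itself singular at the re-entrant corner, so one cannot invoke the classical $O(h)$ interpolation estimate valid for $H^2$ data. What rescues the duality argument is that Theorem~\ref{thm: Pi_h} was proved for \emph{arbitrary} $f\in L_2(\Omega)$, and hence applies equally to the dual data $e$. The non-convexity thus degrades both the primal and the dual interpolation errors in exactly the same way, and the two resulting factors of $\epsilon(h,\gamma)$ combine to produce the advertised $\epsilon(h,\gamma)^2$ rate. No regularity of $f$ or $e$ beyond membership in $L_2(\Omega)$ is needed, which is precisely why the uniform-in-data formulation of Theorem~\ref{thm: Pi_h} is the right tool here.
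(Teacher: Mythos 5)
Your proposal is correct and follows essentially the same route as the paper: the energy-norm bound via quasi-optimality~\eqref{eq: quasioptimal} plus Theorem~\ref{thm: Pi_h}, and the $L_2$-bound via the Aubin--Nitsche duality argument, using symmetry of $a$ and the fact that Theorem~\ref{thm: Pi_h} holds for arbitrary $L_2$ data so it applies to the (singular) dual solution. The only cosmetic difference is that you take the dual datum to be $e=u_h-u$ itself and cancel a factor of $\|e\|$, whereas the paper tests against an arbitrary $\phi\in L_2(\Omega)$ and concludes by duality of norms; these are equivalent.
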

\begin{proof}
The bound in~$\Htilde^1(\Omega)$ follows at once 
from~\eqref{eq: quasioptimal} and Theorem~\ref{thm: Pi_h}.
The error bound in~$L_2(\Omega)$ is proved via the usual
duality argument. In fact, given any $\phi\in L_2(\Omega)$, the dual 
variational problem
\[
a(w,\psi)=\iprod{w,\phi}\quad\text{for all $w\in\Htilde^1(\Omega)$,}
\]
has a unique solution~$\psi\in\Htilde^1(\Omega)$.  Since the bilinear 
form~$a$ is symmetric, the preceding estimate for $u-\Pi_hu$ carries 
over, with $\phi$ playing the role of~$f$, to yield
$\|\psi-\Pi_h\psi\|_{\Htilde^1(\Omega)}
\le C\epsilon(h,\gamma)\|\phi\|$. Thus,
\begin{align*}
|\iprod{u_h-u,\phi}|&=|a(u_h-u,\psi)|=|a(u_h-u,\psi-\Pi_h\psi)|\\
	&\le C\|u_h-u\|_{\Htilde^1(\Omega)}
	\|\psi-\Pi_h\psi\|_{\Htilde^1(\Omega)}
	\le C\epsilon(h,\gamma)^2\|f\|\|\phi\|,
\end{align*}
implying that $\|u_h-u\|\le C\epsilon(h,\gamma)^2\|f\|$.
\qed\end{proof}
\section{The time-dependent problem}\label{sec: time-dep}
We may view $A=-K\nabla^2$ as an unbounded operator 
on~$L_2(\Omega)$ with domain~$V^2$ given by~\eqref{eq: V2 def}.
Since the associated bilinear form~\eqref{eq: a(u,v)} is symmetric 
and coercive, and since the 
inclusion~$\Htilde^1(\Omega)\subseteq L_2(\Omega)$ is compact, there
exists a complete orthonormal sequence of eigenfunctions $\phi_1$, 
$\phi_2$, $\phi_3$, \dots and corresponding real eigenvalues 
$\lambda_1$, $\lambda_2$, $\lambda_3$, \dots with 
$\lambda_j\to\infty$ as~$j\to\infty$.  Thus,
\[
A\phi_n=\lambda_n\phi_n\quad\text{and}\quad
\iprod{\phi_m,\phi_n}=\delta_{mn}\quad
\text{for all $m$, $n\in\{1,2,3,\ldots\}$,}
\]
and we may assume that 
$0<\lambda_1\le\lambda_2\le\lambda_3\le\cdots$. Moreover, 
\[
(zI-A)^{-1}:L_2(\Omega)\to L_2(\Omega)
\]
is a bounded linear operator for each complex number~$z$ not in the 
spectrum~$\spec(A)=\{\,\lambda_1,\lambda_2,\lambda_3,\ldots\}$, and 
given any~$\theta_0\in(0,\pi)$ we have a resolvent estimate in 
the induced operator norm~\cite[Lemma~1]{LeGiaMcLean2011},
\begin{equation}\label{eq: resolvent}
\|(zI-A)^{-1}\|\le\frac{1+2/\lambda_1}{\sin\theta_0}\,\frac{1}{1+|z|}
	\quad\text{for $|\arg z|>\theta_0$.}
\end{equation}

Define the Laplace transform~$\hat f=\LT f$ of a 
suitable~$f:[0,\infty)\to L_2(\Omega)$ by
\[
\hat f(z)=(\LT f)(z)=\LT\{f(t)\}_{t\to z}
  =\int_0^\infty e^{-zt}f(t)\,dt,
\]
for~$\Re z$ sufficiently large. Since 
$\LT\{\partial_t^{1-\alpha}f\}_{t\to z}=z^{1-\alpha}\hat f(z)$, a 
formal calculation implies that the fractional diffusion 
equation~\eqref{eq: fpde} transforms to an elliptic problem (with 
complex coefficients) for~$\hat u(z)$,
\[
z\hat u(z)+z^{1-\alpha}A\hat u(z)=u_0+\hat f(z),
\]
and so
\begin{equation}\label{eq: u hat}
\hat u(z)=z^{\alpha-1}(z^\alpha I+A)^{-1}\bigl(u_0+\hat f(z)\bigr).
\end{equation}
The boundary condition~$u(t)=0$ on~$\partial\Omega$ tranforms to give
$\hat u(z)=0$ on~$\partial\Omega$.
Using $\LT\{t^{p\alpha}/\Gamma(1+p\alpha)\}_{t\to z}=z^{-1-p\alpha}$ 
we find that for $\lambda>0$~and $|z|>\lambda^{-1/\alpha}$,
\[
z^{\alpha-1}(z^\alpha+\lambda)^{-1}
	=z^{-1}\sum_{p=0}^\infty\bigl(-\lambda z^{-\alpha}\bigr)^p
	=\LT\biggl\{\sum_{p=0}^\infty
		\frac{(-\lambda t^\alpha)^p}{\Gamma(1+p\alpha)}
		\biggr\}_{t\to z}
	=\LT\{E_\alpha(-\lambda t^\alpha)\},
\]
where $E_\alpha$ is the Mittag--Leffler function.  
The inequalities~$0\le E_\alpha(-t)\le1$ for~$0\le t<\infty$ 
imply that the sum
\begin{equation}\label{eq: E(t)}
\E(t)v=\sum_{n=1}^\infty E_\alpha(-\lambda_nt^\alpha)
	\iprod{v,\phi_n}\phi_n
\end{equation}
defines a bounded linear operator~$\E(t):L_2(\Omega)\to L_2(\Omega)$ 
satisfying
\begin{equation}\label{eq: ||E(t)||}
\|\E(t)v\|\le\|v\|\quad\text{for $0\le t<\infty$.}
\end{equation}
Thus, for each eigenfunction~$\phi_n$,
\[
z^{\alpha-1}(z^\alpha I+A)^{-1}\phi_n
	=z^{\alpha-1}(z^\alpha+\lambda_n)^{-1}\phi_n
	=\LT\{E_\alpha(-\lambda_nt^\alpha)\phi_n\}_{t\to z},
\]
and we conclude that
\begin{equation}\label{eq: E hat}
\Ehat(z)=z^{\alpha-1}(z^\alpha I+A)^{-1}.
\end{equation}
Writing~\eqref{eq: u hat} 
as~$\hat u(z)=\Ehat(z)u_0+\Ehat(z)\hat f(z)$ then yields a Duhamel 
formula,
\begin{equation}\label{eq: u(t) mild}
u(t)=\E(t)u_0+\int_0^t\E(t-s)f(s)\,ds\quad\text{for $t>0$,}
\end{equation}
that serves to define the \emph{mild solution} of our 
initial-boundary value problem for~\eqref{eq: fpde}.
In particular, $\E(t)$ is the solution operator for the homogeneous 
problem ($f\equiv0$) with initial data~$u_0\in L_2(\Omega)$.  Also, 
the bound~\eqref{eq: ||E(t)||} immediately implies a stability 
estimate in~$L_2(\Omega)$ for a general, locally 
integrable~$f:[0,\infty)\to L_2(\Omega)$, namely
\[
\|u(t)\|\le\|u_0\|+\int_0^t\|f(s)\|\,ds\quad\text{for $t>0$.}
\]

\section{The semidiscrete finite element solution}
\label{sec: semidiscrete}
Let $P_h$ denote the orthoprojector~$L_2(\Omega)\to V_h$, that is,
$P_hv\in V_h$ satisfies $\iprod{P_hv,w}=\iprod{v,w}$ for all 
$v\in L_2(\Omega)$~and $w\in V_h$. There exists a unique linear 
operator~$A_h:V_h\to V_h$ such that
\[
\iprod{A_hv,w}=a(v,w)\quad\text{for all $v$, $w\in V_h$,}
\]
and the operator equation~$A_hu_h=P_hf$ is 
equivalent to the variational equation~\eqref{eq: uh weak}
used to define the finite element solution~$u_h\in V_h$ of the 
elliptic problem~\eqref{eq: elliptic bvp}.  Denote the number of 
degrees of freedom by~$N=\dim V_h$ and equip~$V_h$ with the norm 
induced from~$L_2(\Omega)$.  The finite element space~$V_h$ has an 
orthonormal basis of eigenfunctions~$\Phi_1$, $\Phi_2$, \dots, 
$\Phi_N$ with corresponding real eigenvalues $\Lambda_1$, $\Lambda_2$, 
\dots, $\Lambda_N$. Thus,
\[
A_h\Phi_n=\Lambda_n\Phi_n\quad\text{and}\quad
\iprod{\Phi_m,\Phi_n}=\delta_{mn}\quad
\text{for $m$, $n\in\{1,2,\ldots,N\}$,}
\]
and we may assume that $0<\Lambda_1\le\Lambda_2\le\cdots\le\Lambda_N$.
Moreover, the resolvent
\[
(zI-A_h)^{-1}:V_h\to V_h
\]
exists for every 
$z\notin\spec(A_h)=\{\Lambda_1,\Lambda_2,\ldots,\Lambda_N\}$, and we 
have the an estimate corresponding to~\eqref{eq: resolvent}:
\begin{equation}\label{eq: resolvent h}
\|(zI-A_h)^{-1}\|\le\frac{1+2/\Lambda_1}{\sin\theta_0}
	\,\frac{1}{1+|z|}\quad\text{for $|\arg z|>\theta_0$.}
\end{equation}
Note that $\lambda_1\le\Lambda_1$ so this bound is uniform in~$h$.

The first Green identity yields the variational formulation
for~\eqref{eq: fpde},
\begin{equation}\label{eq: u(t) variational}
\iprod{\partial_tu,v}+a(\partial_t^{1-\alpha}u,v)=\iprod{f(t),v}
	\quad\text{for all $v\in\Htilde^1(\Omega)$ and $t>0$,}
\end{equation}
so we define the finite element solution~$u_h:[0,\infty)\to V_h$ by
\begin{equation}\label{eq: uh(t) variational}
\iprod{\partial_tu_h,v}+a(\partial_t^{1-\alpha}u_h,v)=\iprod{f(t),v}
	\quad\text{for all $v\in V_h$ and $t>0$,}
\end{equation}
with $u_h(0)=u_{0h}$, where $u_{0h}\in V_h$ is a suitable 
approximation to the initial data~$u_0$.
Thus, the vector of nodal values~$\vecU(t)$ satisfies the 
integro-differential equation in~$\R^N$,
\begin{equation}\label{eq: MOL}
\matM\partial_t\vecU+\matS\partial_t^{1-\alpha}
\vecU=\vecF(t) ,
\end{equation}
where $\matM$~and $\matS$ denote the $N\times N$ mass and stiffness 
matrices, respectively, and $\vecF(t)$ denotes the load vector.  In 
the limiting case as~$\alpha\to1$, when~\eqref{eq: fpde} becomes 
the heat equation, we see that \eqref{eq: MOL} reduces to the usual 
system of (stiff) ODEs arising in the method of lines.

The variational equation~\eqref{eq: uh(t) variational} is equivalent 
to
\[
\partial_tu_h+\partial_t^{1-\alpha}A_hu_h=P_hf(t)
	\quad\text{for $t>0$.}
\]
Taking Laplace transforms as in Section~\ref{sec: time-dep}, we find 
that
\[
\hat u_h(z)=z^{\alpha-1}(z^\alpha I+A_h)^{-1}
	\bigl(u_{0h}+P_h\hat f(z)\bigr)
\]
and thus
\[
u_h(t)=\E_h(t)u_{0h}+\int_0^t\E_h(t-s)P_hf(s)\,ds
	\quad\text{for $t>0$,}
\]
where
\[
\E_h(t)v=\sum_{n=1}^N E_\alpha(-\Lambda_n t^\alpha)
	\iprod{v,\Phi_n} \Phi_n.
\]
In the same way as~\eqref{eq: ||E(t)||} we have 
\begin{equation}\label{eq: ||Eh(t)||}
\|\E_h(t)v\|\le\|v\|\quad\text{for $t>0$~and $v\in v_h$,}
\end{equation}
implying that the finite element solution is stable in~$L_2(\Omega)$,
\begin{equation}\label{eq: uh stable}
\|u_h(t)\|\le\|u_{0h}\|+\int_0^t\|f(s)\|\,ds.
\end{equation}

For convenience, we put
\[
B(z)=(z^\alpha I+A)^{-1}
\quad\text{and}\quad
B_h(z)=(z^\alpha I+A_h)^{-1},
\]
which satisfy the following bounds.

\begin{lemma}\label{lem: B(z)}
If $|\arg z^\alpha|<\pi-\theta_0$, then 
\begin{enumerate}
\item 
$\|B(z)v\|\le C\|v\|/(1+|z|^\alpha)$ and
$\|B(z)v\|_{\Htilde^1(\Omega)}\le C\|v\|$
for $v\in L_2(\Omega)$;
\item
$\|B_h(z)v\|\le C\|v\|/(1+|z|^\alpha)$ and
$\|B_h(z)v\|_{\Htilde^1(\Omega)}\le C\|v\|$
for $v\in V_h$.
\end{enumerate}
\end{lemma}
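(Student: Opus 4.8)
The plan is to treat the two stated inequalities in each part separately: the $L_2$ bound will come directly from the resolvent estimates already in hand, while the $\Htilde^1(\Omega)$ bound will come from the eigenfunction expansion together with an elementary sector estimate.

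First, for part~(1), I would observe that $z^\alpha I+A=-(\zeta I-A)$ with $\zeta=-z^\alpha$, so that $B(z)=-(\zeta I-A)^{-1}$. The hypothesis $|\arg z^\alpha|<\pi-\theta_0$ is exactly the statement that $|\arg\zeta|=\pi-|\arg z^\alpha|>\theta_0$, so the resolvent bound~\eqref{eq: resolvent} applies and yields $\|B(z)v\|=\|(\zeta I-A)^{-1}v\|\le C\|v\|/(1+|\zeta|)=C\|v\|/(1+|z|^\alpha)$, which is the first inequality. The only thing to check here is the bookkeeping of arguments, namely that reflecting $z^\alpha$ through the origin moves it out of the excluded sector $|\arg\zeta|\le\theta_0$ and into the region where~\eqref{eq: resolvent} is valid.

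For the $\Htilde^1(\Omega)$ bound I would expand in the orthonormal eigenbasis: writing $v=\sum_n\iprod{v,\phi_n}\phi_n$ gives $B(z)v=\sum_n(z^\alpha+\lambda_n)^{-1}\iprod{v,\phi_n}\phi_n$, and since the coercive form $a$ makes the energy norm equivalent to $\|\cdot\|_{\Htilde^1(\Omega)}$ on $V$, we have $\|B(z)v\|_{\Htilde^1(\Omega)}^2\le C\,a(B(z)v,B(z)v)=C\sum_n\lambda_n|z^\alpha+\lambda_n|^{-2}|\iprod{v,\phi_n}|^2$. The key elementary fact is that for $\lambda>0$ and $|\arg z^\alpha|\le\pi-\theta_0$ one has $|z^\alpha+\lambda|\ge\sin\theta_0\,\max(|z^\alpha|,\lambda)$; this follows from the law of cosines applied to the triangle with vertices $0$, $z^\alpha$, $-\lambda$, whose angle at the origin is at least $\theta_0$, and reduces after normalizing to the trivial inequality $(t-\cos\theta_0)^2\ge0$. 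Consequently $\lambda_n|z^\alpha+\lambda_n|^{-2}\le(\sin^2\theta_0\,\lambda_n)^{-1}\le(\sin^2\theta_0\,\lambda_1)^{-1}$, uniformly in $n$ and $z$, and summing against $|\iprod{v,\phi_n}|^2$ gives $\|B(z)v\|_{\Htilde^1(\Omega)}\le C\|v\|$.

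Part~(2) is then proved by repeating both arguments verbatim with $A_h$, $\Phi_n$, $\Lambda_n$ and the discrete resolvent estimate~\eqref{eq: resolvent h} in place of $A$, $\phi_n$, $\lambda_n$ and~\eqref{eq: resolvent}; here $v$ ranges over $V_h$, the sum is finite ($1\le n\le N$), and the same sector inequality applies. The one point requiring care — and the closest thing to an obstacle — is the uniformity of the constants in $h$. This is guaranteed because $\Lambda_1\ge\lambda_1$, so that both the resolvent constant in~\eqref{eq: resolvent h} and the bound $(\sin^2\theta_0\,\Lambda_1)^{-1}\le(\sin^2\theta_0\,\lambda_1)^{-1}$ are controlled independently of the triangulation.
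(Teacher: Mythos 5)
Your proof is correct, but your route to the $\Htilde^1(\Omega)$ bound differs from the paper's. The $L_2$ estimate is handled identically in both arguments: write $z^\alpha I+A=-(\zeta I-A)$ with $\zeta=-z^\alpha$ and invoke \eqref{eq: resolvent} (respectively \eqref{eq: resolvent h}). For the $\Htilde^1$ bound, however, the paper does not use the spectral decomposition at all: it observes that $w(z)=B(z)v$ satisfies $Aw(z)=v-z^\alpha w(z)$ and then applies the Lax--Milgram stability estimate \eqref{eq: Htilde u} (respectively \eqref{eq: Htilde uh}), so that $\|w(z)\|_{\Htilde^1(\Omega)}\le C\|v-z^\alpha w(z)\|_{H^{-1}(\Omega)}\le C\|v\|+C|z|^\alpha\|B(z)v\|\le C\|v\|$, recycling the $L_2$ bound just proved. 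Your argument instead expands $B(z)v$ in the eigenbasis, bounds the $\Htilde^1$ norm by coercivity via $\sum_n\lambda_n|z^\alpha+\lambda_n|^{-2}|\iprod{v,\phi_n}|^2$, and controls each coefficient with the sector inequality $|z^\alpha+\lambda|\ge\sin\theta_0\max(|z|^\alpha,\lambda)$, which you prove correctly (and which would, as a byproduct, also reprove the resolvent estimate itself, so your proof is self-contained in a way the paper's is not — it never needs \eqref{eq: Htilde u} or \eqref{eq: Htilde uh}). The trade-off is that your argument leans essentially on the symmetry of $a$, i.e.\ on $A$ and $A_h$ being self-adjoint with complete orthonormal eigenbases, whereas the paper's two-line argument uses only coercivity-based stability of the weak problem and would survive a non-self-adjoint perturbation (e.g.\ a convection term or non-symmetric coefficients). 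Two minor points of hygiene: since the coefficients are complex, $a(B(z)v,B(z)v)$ should be read as the sesquilinear (Hermitian) extension $K\int_\Omega|\nabla B(z)v|^2\,dx$ — your use of $|\iprod{v,\phi_n}|^2$ shows this is what you intend — and your uniformity-in-$h$ remark via $\Lambda_1\ge\lambda_1$ matches exactly the observation the paper makes after \eqref{eq: resolvent h}.
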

\begin{proof}
First let~$v\in L_2(\Omega)$. The resolvent 
estimate~\eqref{eq: resolvent} immediately implies the desired bounds 
for $w(z)=B(z)v$ in~$L_2(\Omega)$.  To estimate the norm of~$w(z)$ 
in~$\Htilde^1(\Omega)$, observe that $Aw(z)=v-z^\alpha w(z)$ so 
by~\eqref{eq: Htilde u},
\[
\|w(z)\|_{\Htilde^1(\Omega)}\le C\|v-z^\alpha w(z)\|_{H^{-1}(\Omega)}
	\le C\|v-z^\alpha B(z)v\|\le C\|v\|.
\]
When~$v\in V_h$, the estimates for~$B_h(z)v$ follow in the same way 
from \eqref{eq: resolvent h}~and \eqref{eq: Htilde uh}.
\qed\end{proof}

Since $\LT\{\E(t)\phi_n\}_{t\to z}
=z^{\alpha-1}(z^\alpha+\lambda_n)^{-1}\phi_n=z^{\alpha-1}B(z)\phi_n$, 
the Laplace inversion formula implies that
\begin{align*}
\E(t)\phi_n&=\lim_{M\to\infty}\biggl(\frac{1}{2\pi i} 
  \int_{1-iM}^{1+iM} 
  e^{zt}z^{\alpha-1}(z^\alpha+\lambda_n)^{-1}\,dz\biggr)\phi_n
  =\frac{1}{2\pi i}\int_\Gamma e^{zt}z^{\alpha-1}B(z)\phi_n\,dz,
\end{align*}
for~$t>0$ and for any contour~$\Gamma$ that 
begins at~$\infty$ in the third quadrant, ends at~$\infty$ in the 
second quadrant and avoids the negative real axis.
The factor~$e^{zt}$ is exponentially small as~$\Re z\to-\infty$,
so \eqref{eq: ||E(t)||} and Lemma~\ref{lem: B(z)} ensure that
\begin{equation}\label{eq: E integral}
\E(t)v=\frac{1}{2\pi i}\int_\Gamma e^{zt}z^{\alpha-1}B(z)v\,dz
\quad\text{for $t>0$ and $v\in L_2(\Omega)$,}
\end{equation}
where the integral over~$\Gamma$ is absolutely convergent 
in~$L_2(\Omega)$.

Likewise, 
$\LT\{\E_h(t)\Phi_n\}_{t\to z}
=z^{\alpha-1}(z^\alpha+\Lambda_n)^{-1}\Phi_n=z^{\alpha-1}B_h(z)\Phi_n$
and we have a corresponding integral representation
\begin{equation}\label{eq: Eh integral}
\E_h(t)v=\frac{1}{2\pi i}\int_\Gamma e^{zt}z^{\alpha-1}
	B_h(z)v\,dz\quad\text{for $t>0$ and $v\in V_h$.}
\end{equation}
\section{Error bounds}\label{sec: error}
\subsection{The homogeneous equation}

We now consider the error~$u_h(t)-u(t)$ in the case~$f\equiv0$.  
The main difficulty will be to estimate the difference
\begin{equation}\label{eq: E(t) error}
\E_h(t)P_hu_0-\E(t)u_0=\frac{1}{2\pi i}\int_\Gamma 
	e^{zt}z^{\alpha-1}G_h(z)u_0\,dz,
\end{equation}
where, by \eqref{eq: E integral}~and \eqref{eq: Eh integral},
$G_h(z)=B_h(z)P_h-B(z)$. We begin by estimating $G_h(z)v$.

\begin{lemma}\label{lem: Gh(z)}
If $v\in L_2(\Omega)$~and $|\arg z^\alpha|<\pi-\theta_0$, then
\[
\|G_h(z)v\|\le C\epsilon(h,\gamma)^2\|v\|
\quad\text{and}\quad
\|G_h(z)v\|_{\Htilde^1(\Omega)}\le 
C(1+|z|^\alpha)\epsilon(h,\gamma)\|v\|.
\]
\end{lemma}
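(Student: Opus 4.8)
The plan is to recognize $G_h(z)v$ as the Galerkin error for a shifted elliptic problem and then reduce it to the elliptic estimates already in hand from Theorem~\ref{thm: elliptic error}. Put $w=B(z)v$ and $w_h=B_h(z)P_hv$, so that $G_h(z)v=w_h-w$. Working in the complexified space with the Hermitian versions of $\iprod{\cdot,\cdot}$ and $a(\cdot,\cdot)$, and introducing the sesquilinear form
\[
a_z(\phi,\psi)=z^\alpha\iprod{\phi,\psi}+a(\phi,\psi),
\]
I observe that $w$ and $w_h$ are the exact and finite element solutions of the same variational problem: $a_z(w,\chi)=\iprod{v,\chi}$ for all $\chi\in V$, while $a_z(w_h,\chi)=\iprod{P_hv,\chi}=\iprod{v,\chi}$ for all $\chi\in V_h$. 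This yields the Galerkin orthogonality $a_z(w_h-w,\chi)=0$ for $\chi\in V_h$.

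The first ingredient is a sectorial coercivity bound. Since $a_z(\phi,\phi)=z^\alpha\|\phi\|^2+K|\phi|_{1,\Omega}^2$ is the sum of a complex number with argument $\arg z^\alpha$ (where $|\arg z^\alpha|<\pi-\theta_0$) and a nonnegative real number, the two summands subtend an angle at most $\pi-\theta_0<\pi$, so their sum cannot be small relative to their moduli. An elementary estimate (the worst case being $\arg z^\alpha=\pm(\pi-\theta_0)$) then gives
\[
|a_z(\phi,\phi)|\ge c_{\theta_0}\bigl(|z|^\alpha\|\phi\|^2+|\phi|_{1,\Omega}^2\bigr)
\quad\text{for all }\phi\in\Htilde^1(\Omega),
\]
with $c_{\theta_0}>0$ depending only on $\theta_0$.

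The second ingredient is the elliptic (Ritz) projection $R_h:\Htilde^1(\Omega)\to V_h$, defined by $a(R_hw-w,\chi)=0$ for all $\chi\in V_h$, which lets me split $G_h(z)v=w_h-w=\theta-\rho$ with $\theta=w_h-R_hw\in V_h$ and $\rho=w-R_hw$. Because $w$ solves $Aw=v-z^\alpha w$ with $\|Aw\|\le C\|v\|$ by Lemma~\ref{lem: B(z)}, and $R_hw$ is precisely the finite element solution of this elliptic problem, Theorem~\ref{thm: elliptic error} (applied to real and imaginary parts of the data separately) gives $\|\rho\|\le C\epsilon(h,\gamma)^2\|v\|$ and $\|\rho\|_{\Htilde^1(\Omega)}\le C\epsilon(h,\gamma)\|v\|$. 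For $\theta$, combining the Galerkin orthogonality with the defining property of $R_h$ cancels the $a$-part and leaves $a_z(\theta,\chi)=z^\alpha\iprod{\rho,\chi}$ for all $\chi\in V_h$; testing with $\chi=\theta$ and using the sectorial coercivity gives
\[
c_{\theta_0}\bigl(|z|^\alpha\|\theta\|^2+|\theta|_{1,\Omega}^2\bigr)
\le|z|^\alpha\|\rho\|\,\|\theta\|.
\]
From the $|z|^\alpha\|\theta\|^2$ term I read off $\|\theta\|\le C\|\rho\|\le C\epsilon(h,\gamma)^2\|v\|$, and then $|\theta|_{1,\Omega}\le C|z|^{\alpha/2}\|\rho\|\le C|z|^{\alpha/2}\epsilon(h,\gamma)^2\|v\|$. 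Adding the $\theta$ and $\rho$ contributions and using the Poincar\'e equivalence $\|\cdot\|_{\Htilde^1(\Omega)}\sim|\cdot|_{1,\Omega}$ yields the $L_2$ bound at once, while the $\Htilde^1$ bound follows because $\epsilon(h,\gamma)$ is bounded and $|z|^{\alpha/2}\le1+|z|^\alpha$, whence $|z|^{\alpha/2}\epsilon(h,\gamma)^2\le C(1+|z|^\alpha)\epsilon(h,\gamma)$.

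I expect the main obstacle to be the correct handling of the complex shift $z^\alpha$: establishing the sectorial coercivity uniformly for $|\arg z^\alpha|<\pi-\theta_0$, and, more delicately, exploiting the $|z|^\alpha\|\theta\|^2$ term so that $\theta$ is controlled in $L_2$ \emph{without losing any power of $|z|$} (so that no separate duality argument for $\theta$ is needed), while absorbing the unavoidable factor $|z|^{\alpha/2}$ in the $\Htilde^1$ estimate into the stated $(1+|z|^\alpha)$ growth.
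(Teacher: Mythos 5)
Your proof is correct, and it shares the paper's central reduction: like the authors, you introduce $w=B(z)v$, use Lemma~\ref{lem: B(z)} to see that the elliptic data $Aw=v-z^\alpha w$ is bounded by $C\|v\|$ in $L_2(\Omega)$, and then invoke Theorem~\ref{thm: elliptic error} for the elliptic Galerkin approximation of $w$ (your $R_hw$ is exactly the paper's $w_h(z)$, defined there by $A_hw_h=P_h[v-z^\alpha w]$). Where you genuinely diverge is in how the remaining discrete term is controlled. The paper derives the operator identity $G_h(z)v=(w_h-w)-z^\alpha B_h(z)P_h[w_h-w]$ and disposes of the correction term with the discrete resolvent estimate of Lemma~\ref{lem: B(z)}(2), which gives $\|z^\alpha B_h(z)P_h\rho\|\le C\|\rho\|$ and $\|z^\alpha B_h(z)P_h\rho\|_{\Htilde^1(\Omega)}\le C|z|^\alpha\|\rho\|$ for $\rho=w-R_hw$. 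You instead exploit Galerkin orthogonality of $B_h(z)P_hv-w$ with respect to the shifted sesquilinear form $a_z(\phi,\psi)=z^\alpha\iprod{\phi,\psi}+a(\phi,\psi)$, and bound $\theta=B_h(z)P_hv-R_hw$ by an energy argument resting on the sectorial coercivity $|a_z(\phi,\phi)|\ge c_{\theta_0}\bigl(|z|^\alpha\|\phi\|^2+|\phi|_{1,\Omega}^2\bigr)$, which is a valid elementary estimate under the hypothesis $|\arg z^\alpha|<\pi-\theta_0$. Note that your $\theta$ is in fact identical to the paper's correction term, since $\theta=z^\alpha B_h(z)P_h\rho$; so the decompositions coincide and only the technique differs. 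What your route buys: it uses only part (1) of Lemma~\ref{lem: B(z)} (the continuous resolvent), re-deriving the needed discrete stability from coercivity rather than quoting \eqref{eq: resolvent h}, and it yields the slightly sharper gradient bound $|\theta|_{1,\Omega}\le C|z|^{\alpha/2}\|\rho\|$ in place of $C|z|^\alpha\|\rho\|$ (both suffice, being absorbed into the factor $(1+|z|^\alpha)\epsilon(h,\gamma)$). What the paper's route buys: brevity, since Lemma~\ref{lem: B(z)} is already in hand and no coercivity estimate need be established. One small point common to both arguments: absorbing $\epsilon(h,\gamma)^2$ into $(1+|z|^\alpha)\epsilon(h,\gamma)$ requires $\epsilon(h,\gamma)\le C$, i.e.\ $h$ bounded, which you state explicitly and the paper leaves implicit.
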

\begin{proof}
Given $v\in L_2(\Omega)$, let $w(z)=B(z)v\in V$ so that
$Aw(z)=v-z^\alpha w(z)$, and let $w_h(z)\in V_h$ be the solution of
\[
A_hw_h(z)=P_h[v-z^\alpha w(z)].
\]
In this way, 
$P_hv=z^\alpha P_h w(z)+A_hw_h(z)=(z^\alpha I+A_h)w_h(z)
-z^\alpha[w_h(z)-P_hw(z)]$, and thus 
$B_h(z)P_hv=w_h(z)-z^\alpha B_h(z)[w_h(z)-P_hw(z)]$, implying that
\begin{equation}\label{eq: Gh(z)}
G_h(z)v=w_h(z)-w(z)-z^\alpha B_h(z)P_h[w_h(z)-w(z)].
\end{equation}
Lemma~\ref{lem: B(z)} shows that $\|w(z)\|\le C\|v\|/(1+|z|^\alpha)$ 
so $\|v-z^\alpha w(z)\|\le C\|v\|$.
By applying Theorem~\ref{thm: elliptic error}, with $w(z)$~and
$v-z^\alpha w(z)$ playing the roles of $u$~and $f$, respectively, we 
deduce that
\[
\|w_h(z)-w(z)\|\le C\epsilon(h,\gamma)^2\|v\|
\quad\text{and}\quad
\|w_h(z)-w(z)\|_{\Htilde^1(\Omega)}\le C\epsilon(h,\gamma)\|v\|.
\]
The result now follows from~\eqref{eq: Gh(z)} after another 
application of Lemma~\ref{lem: B(z)}.
\qed\end{proof}

\begin{theorem}\label{thm: non-smooth u0}
Assume that $f\equiv0$~and $u_0\in L_2(\Omega)$. Then the mild
solution~$u(t)=\E(t)u_0$ and its finite element 
approximation~$u_h(t)=\E_h(t)u_{0h}$ satisfy the error bounds
\[
\|u_h(t)-u(t)\|\le\|u_{0h}-P_hu_0\|
	+Ct^{-\alpha}\epsilon(h,\gamma)^2\|u_0\|
\]
and
\[
\|u_h(t)-u(t)\|_{\Htilde^1(\Omega)}
	\le Ct^{-\alpha}\|u_{0h}-P_hu_0\|
	+C(t^{-2\alpha}+t^{-\alpha})\epsilon(h,\gamma)\|u_0\|
\]
for $t>0$, where $\epsilon(h,\gamma)$ is given by~\eqref{eq: epsilon}.
\end{theorem}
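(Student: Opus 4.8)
The plan is to split the error with the help of the $L_2$-orthoprojector. Writing
\[
u_h(t)-u(t)=\E_h(t)u_{0h}-\E(t)u_0
=\E_h(t)(u_{0h}-P_hu_0)+\bigl[\E_h(t)P_hu_0-\E(t)u_0\bigr],
\]
the first term isolates the effect of approximating the initial data, while the second is the genuine discretization error already represented as the contour integral~\eqref{eq: E(t) error}. For the first term in~$L_2(\Omega)$ I would simply invoke the stability bound~\eqref{eq: ||Eh(t)||} to get $\|\E_h(t)(u_{0h}-P_hu_0)\|\le\|u_{0h}-P_hu_0\|$. No such uniform bound is available in $\Htilde^1(\Omega)$, so there I would first establish a smoothing estimate: from the representation~\eqref{eq: Eh integral} together with the bound $\|B_h(z)w\|_{\Htilde^1(\Omega)}\le C\|w\|$ of Lemma~\ref{lem: B(z)},
\[
\|\E_h(t)w\|_{\Htilde^1(\Omega)}
\le\frac{1}{2\pi}\int_\Gamma|e^{zt}|\,|z|^{\alpha-1}\,\|B_h(z)w\|_{\Htilde^1(\Omega)}\,|dz|
\le C\|w\|\int_\Gamma|e^{zt}|\,|z|^{\alpha-1}\,|dz|,
\]
and then take $w=u_{0h}-P_hu_0\in V_h$.

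Everything then reduces to bounding contour integrals of the form $\int_\Gamma|e^{zt}|\,|z|^{s-1}\,|dz|$. Here I would fix $\Gamma$ to consist of the two rays $z=\rho e^{\pm i\psi}$ ($\rho\ge\delta$) closed off by the arc $|z|=\delta$, choosing the half-opening angle $\psi\in(\pi/2,\pi)$ small enough that $\alpha\psi<\pi-\theta_0$ (which is possible once $\theta_0$ is taken sufficiently small), so that Lemmas~\ref{lem: B(z)} and~\ref{lem: Gh(z)} are valid along $\Gamma$ while $\cos\psi<0$ forces $e^{zt}$ to decay. On the rays $|e^{zt}|=e^{-\rho t|\cos\psi|}$, and the substitution $\sigma=\rho t|\cos\psi|$ turns the ray integrals into a multiple of $\Gamma(s)\,t^{-s}$ (convergent at the origin since $s>0$), while the arc contribution, taking $\delta=1/t$, is of the same order; hence
\[
\int_\Gamma|e^{zt}|\,|z|^{s-1}\,|dz|\le Ct^{-s}\qquad(s>0).
\]
This is the one genuinely new computation; the rest is assembly.

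With these estimates the conclusion follows. Applying the integral bound with $s=\alpha$ in the smoothing estimate gives $\|\E_h(t)(u_{0h}-P_hu_0)\|_{\Htilde^1(\Omega)}\le Ct^{-\alpha}\|u_{0h}-P_hu_0\|$. For the discretization term I would pull norms inside~\eqref{eq: E(t) error} and insert Lemma~\ref{lem: Gh(z)}: the $L_2$ estimate $\|G_h(z)u_0\|\le C\epsilon(h,\gamma)^2\|u_0\|$ with $s=\alpha$ yields $\|\E_h(t)P_hu_0-\E(t)u_0\|\le Ct^{-\alpha}\epsilon(h,\gamma)^2\|u_0\|$, whereas the $\Htilde^1$ estimate $\|G_h(z)u_0\|_{\Htilde^1(\Omega)}\le C(1+|z|^\alpha)\epsilon(h,\gamma)\|u_0\|$ splits the integrand into a $|z|^{\alpha-1}$ piece and a $|z|^{2\alpha-1}$ piece, controlled by $s=\alpha$ and $s=2\alpha$ respectively, producing the factor $C(t^{-\alpha}+t^{-2\alpha})\epsilon(h,\gamma)\|u_0\|$. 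Adding the two contributions in each norm gives exactly the stated bounds. The main obstacle is organizational rather than deep: selecting the contour so that it lies in the sector of validity of the resolvent bounds yet still wraps the negative real axis, and tracking the powers of $|z|$ so that the singular factor $z^{\alpha-1}$ and the growth $1+|z|^\alpha$ combine into the advertised powers $t^{-\alpha}$ and $t^{-2\alpha}$.
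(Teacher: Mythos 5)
Your proposal is correct and follows essentially the same route as the paper: the identical splitting $u_h(t)-u(t)=\E_h(t)(u_{0h}-P_hu_0)+[\E_h(t)P_hu_0-\E(t)u_0]$, stability~\eqref{eq: ||Eh(t)||} for the data term, and contour-integral bounds on \eqref{eq: E(t) error}~and \eqref{eq: Eh integral} using Lemmas \ref{lem: B(z)}~and \ref{lem: Gh(z)}, with the powers $|z|^{\alpha-1}$ and $|z|^{2\alpha-1}$ converting to $t^{-\alpha}$ and $t^{-2\alpha}$ exactly as you describe. The only cosmetic difference is the contour: the paper uses the two rays $z=se^{\pm i3\pi/4}$, $0<s<\infty$, passing through the origin (admissible since $\alpha>0$ makes $|z|^{\alpha-1}$ integrable there), whereas you keep a small arc of radius $1/t$; both yield the same $\Gamma(s)\,t^{-s}$ scaling.
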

\begin{proof}
We split the error into two terms,
\[
u_h(t)-u(t)=\E_h(t)\bigl(u_{0h}-P_hu_0\bigr)
	+\bigl[\E_h(t)P_hu_0-\E(t)u_0\bigr].
\]
It follows from~\eqref{eq: ||Eh(t)||} that
$\bigl\|\E_h(t)\bigl(u_{0h}-P_hu_0\bigr)\bigr\|
\le\|u_{0h}-P_hu_0\|$, and to estimate the second term we use
the integral representation~\eqref{eq: E(t) error} 
with~$\Gamma=\Gamma_+-\Gamma_-$, where $\Gamma_\pm$ is the contour 
$z=se^{\pm i3\pi/4}$ for~$0<s<\infty$. Applying Lemma~\ref{lem: Gh(z)} 
and making an obvious substitution, we find that 
\begin{align*}
\bigl\|\E_h(t)P_hu_0-\E(t)u_0\bigr\|&\le C\epsilon(h,\gamma)^2\|u_0\|
	\int_0^\infty e^{-st/\sqrt{2}}s^\alpha\,\frac{ds}{s}\\
	&=C\epsilon(h,\gamma)^2\|u_0\|t^{-\alpha}
	\int_0^\infty e^{-s/\sqrt{2}}s^\alpha\,\frac{ds}{s},
\end{align*}
which proves the first error bound of the theorem.

Choosing $\Gamma=\Gamma_+-\Gamma_-$ in the integral
representation~\eqref{eq: Eh integral} of~$\E_h(t)v$, and using 
Lemma~\ref{lem: B(z)}, we have for $v\in V_h$,
\[
\|\E_h(t)v\|_{\Htilde^1(\Omega)}\le C\int_0^\infty e^{-st/\sqrt{2}}
	s^\alpha\|v\|\,\frac{ds}{s}
	=Ct^{-\alpha}\|v\|\int_0^\infty e^{-s/\sqrt{2}}s^\alpha\,
	\frac{ds}{s}\le Ct^{-\alpha}\|v\|,
\]
so in particular, when~$v=u_{0h}-P_hu_0$,
\[
\bigl\|\E_h(t)\bigl(u_{0h}-P_hu_0\bigr)\bigr\|_{\Htilde^1(\Omega)}
\le Ct^{-\alpha}\|u_{0h}-P_hu_0\|.
\]
Finally, using \eqref{eq: E(t) error}~and Lemma~\ref{lem: Gh(z)} 
again,
\begin{multline*}
\bigl\|\E_h(t)P_hu_0-\E(t)u_0\bigr\|_{\Htilde^1(\Omega)}
	\le C\epsilon(h,\gamma)\|u_0\|
	\int_0^\infty e^{-st/\sqrt{2}}s^\alpha(1+s^\alpha)\,\frac{ds}{s}\\
	=C\epsilon(h,\gamma)\|u_0\|t^{-2\alpha}\int_0^\infty 
	e^{-s/\sqrt{2}}s^\alpha(t^\alpha+s^\alpha)\,\frac{ds}{s}
	\le C\epsilon(h,\gamma)(t^{-\alpha}+t^{-2\alpha})\|u_0\|,
\end{multline*}
proving the second error estimate of the theorem.
\qed\end{proof}

When $u_0$ is sufficiently regular we obtain an 
error bound that is uniform in~$t$.  The proof uses the Ritz 
projector~$R_h:\Htilde^1(\Omega)\to V_h$, defined by
\begin{equation}\label{eq: Ritz R_h}
a(R_hu,v)=a(u,v)\quad\text{for all $v\in V_h$,}
\end{equation}
and relies on the regularity estimate~\cite[Theorem~4.4]{McLean2010}
\begin{equation}\label{eq: Au_t reg}
t^\alpha\|A\partial_tu\|\le Ct^{\sigma\alpha-1}\|A^\sigma u_0\|
\quad\text{for $0<t\le T$ and $0\le\sigma\le2$.}
\end{equation}

\begin{theorem}\label{thm: u0 smooth}
Assume that $f\equiv0$ and $0<\delta\le1$.
If $A^{1+\delta}u_0\in L_2(\Omega)$, then
\[
\|u_h(t)-u(t)\|\le\|u_{0h}-R_hu_0\| 
	+C\delta^{-1}\epsilon(h,\gamma)^2\|A^{1+\delta}u_0\|
	\quad\text{for $0\le t\le T$.}
\]
\end{theorem}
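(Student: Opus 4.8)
The plan is to split the error through the Ritz projector~\eqref{eq: Ritz R_h}, writing $u_h(t)-u(t)=\theta(t)+\rho(t)$ with $\theta=u_h-R_hu$ and $\rho=R_hu-u$. The piece $\rho$ is purely elliptic: since $R_hu(t)$ is exactly the finite element solution of~\eqref{eq: elliptic bvp} with load $Au(t)$ (because $a(R_hu,v)=a(u,v)=\iprod{Au,v}$ for $v\in V_h$), Theorem~\ref{thm: elliptic error} gives $\|\rho(t)\|\le C\epsilon(h,\gamma)^2\|Au(t)\|$. I would then use that $\E(t)$ commutes with $A$ together with the stability bound~\eqref{eq: ||E(t)||} to write $\|Au(t)\|=\|\E(t)Au_0\|\le\|Au_0\|$, and the boundedness of $A^{-\delta}$ (recall $\lambda_1>0$) to get $\|Au_0\|\le C\|A^{1+\delta}u_0\|$. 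This controls $\rho$ uniformly in $t$, with no blow-up as $t\to0$, and supplies one of the $\epsilon^2$ terms.

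The work lies in $\theta$. First I would derive its evolution equation. Since $u_h$ solves the homogeneous semidiscrete problem and $R_h$ commutes with $\partial_t$, subtracting gives $\partial_t\theta+\partial_t^{1-\alpha}A_h\theta=-\partial_tR_hu-\partial_t^{1-\alpha}A_hR_hu$. Using the Galerkin identity $A_hR_h=P_hA$ (immediate from $\iprod{A_hR_hu,v}=a(R_hu,v)=a(u,v)=\iprod{Au,v}$ for $v\in V_h$) together with the identity $\partial_t^{1-\alpha}Au=-\partial_tu$ coming from~\eqref{eq: fpde} with $f\equiv0$, the right-hand side collapses to $g:=\partial_t(P_hu-R_hu)\in V_h$. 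Thus $\theta$ satisfies a forced copy of the semidiscrete equation with initial value $\theta(0)=u_{0h}-R_hu_0$, and the Laplace-transform argument of Section~\ref{sec: semidiscrete} yields the Duhamel formula $\theta(t)=\E_h(t)(u_{0h}-R_hu_0)+\int_0^t\E_h(t-s)g(s)\,ds$.

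Applying the stability bound~\eqref{eq: ||Eh(t)||} then reduces the estimate for $\theta$ to controlling $\|u_{0h}-R_hu_0\|+\int_0^t\|g(s)\|\,ds$. For the integrand I would write $g=P_h\partial_tu-R_h\partial_tu=(P_h\partial_tu-\partial_tu)-(R_h\partial_tu-\partial_tu)$ and use the $L_2$-optimality of $P_h$ (so $\|P_h\partial_tu-\partial_tu\|\le\|R_h\partial_tu-\partial_tu\|$) together with Theorem~\ref{thm: elliptic error} applied to $\partial_tu$, whose elliptic load is $A\partial_tu$, to obtain $\|g(s)\|\le C\epsilon(h,\gamma)^2\|A\partial_tu(s)\|$. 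The decisive step is then the temporal integral: feeding the regularity estimate~\eqref{eq: Au_t reg} with $\sigma=1+\delta$ gives $\|A\partial_tu(s)\|\le Cs^{\delta\alpha-1}\|A^{1+\delta}u_0\|$, whence $\int_0^t\|A\partial_tu(s)\|\,ds\le C(\delta\alpha)^{-1}t^{\delta\alpha}\|A^{1+\delta}u_0\|\le C\delta^{-1}\|A^{1+\delta}u_0\|$ for $0\le t\le T$. Combining the bounds for $\theta$ and $\rho$ gives the stated estimate.

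The main obstacle is not any single inequality but assembling the error equation so that its forcing term is precisely $\partial_t(P_hu-R_hu)$; the two cancellations (Galerkin orthogonality via $A_hR_h=P_hA$, and the PDE identity $\partial_t^{1-\alpha}Au=-\partial_tu$) must line up exactly. The role of $\delta$ is then transparent: integrability of $s^{\delta\alpha-1}$ at $s=0$ requires $\delta>0$, and the resulting factor $(\delta\alpha)^{-1}$ is the source of the $\delta^{-1}$ in the bound, which is precisely why this uniform-in-$t$ estimate degrades as the data becomes less smooth ($\delta\to0$).
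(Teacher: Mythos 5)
Your proof is correct and takes essentially the same route as the paper: the Ritz-projector splitting $u_h-u=\theta+\rho$, the cancellation that turns the equation for $\theta$ into a forced semidiscrete problem with forcing $\partial_t(P_hu-R_hu)$ (the paper writes this as $-\iprod{\partial_t\varrho,v}$, which is the same thing tested against $v\in V_h$), stability of $\E_h$, the Ritz-projection error bound $\|v-R_hv\|\le C\epsilon(h,\gamma)^2\|Av\|$ from Theorem~\ref{thm: elliptic error}, and the regularity estimate~\eqref{eq: Au_t reg} with $\sigma=1+\delta$, which is exactly where the $\delta^{-1}$ arises. The only (correct and harmless) deviation is your treatment of $\rho$: the paper uses $\|\varrho(t)\|\le\|\varrho(0)\|+\int_0^t\|\partial_t\varrho(s)\|\,ds$ and absorbs it into the same time integral, whereas you bound $\|\rho(t)\|\le C\epsilon(h,\gamma)^2\|A u(t)\|\le C\epsilon(h,\gamma)^2\|Au_0\|$ directly via the spectral commutation $A\E(t)u_0=\E(t)Au_0$ and \eqref{eq: ||E(t)||}.
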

\begin{proof}
To begin with, we permit $f\ne0$, and in the usual way, decompose the 
error as
\[
u_h(t)-u(t)=\vartheta(t)+\varrho(t),
\]
where $\vartheta(t)=u_h(t)-R_hu(t)\in V_h$~and 
$\varrho(t)=R_hu(t)-u(t)$. Furthermore,
since $\varrho(t)=\varrho(0)+\int_0^t\partial_t\varrho(s)\,ds$ it 
follows that
\[
\|\varrho(t)\|\le\|\varrho(0)\|+\int_0^t\|\partial_t\varrho(s)\|\,ds.
\]
By~\eqref{eq: uh(t) variational}, if $v\in V_h$ then
\[
\iprod{\partial_t\vartheta,v}+a(\partial_t^{1-\alpha}\vartheta,v)
=\iprod{f,v}-\iprod{\partial_tR_hu,v}-a(\partial_t^{1-\alpha}R_hu,v),
\]
and using the definition of the Ritz projector~\eqref{eq: Ritz R_h} 
followed by~\eqref{eq: u(t) variational}, we have
\[
a(\partial_t^{1-\alpha}R_hu,v)
=a(\partial_t^{1-\alpha}u,v)=\iprod{f,v}-\iprod{\partial_tu,v},
\]
so
\begin{equation}\label{eq: theta}
\iprod{\partial_t\vartheta,v}+a(\partial_t^{1-\alpha}\vartheta,v)
=-\iprod{\partial_t\varrho,v}.
\end{equation}
In other words, $\vartheta$ is the finite element solution of the 
fractional diffusion problem with source term~$-\partial_t\varrho(t)$. 
Thus, the stability estimate~\eqref{eq: uh stable} gives
\[
\|\vartheta(t)\|\le\|\vartheta(0)\|
	+\int_0^t\|\partial_t\varrho(s)\|\,ds.
\]
Theorem~\ref{thm: elliptic error} implies that
$\|v-R_hv\|\le C\epsilon(h,\gamma)^2\|Av\|$, so
\begin{align*}
\|u_h(t)-u(t)\|&\le\|\vartheta(0)\|+\|\varrho(0)\|+2\int_0^t
	\|\partial_t\varrho(s)\|\,ds\\
	&\le\|u_{0h}-R_hu_0\|+C\epsilon(h,\gamma)^2\biggl(\|Au_0\|
+\int_0^t\|A\partial_tu(s)\|\,ds\biggr),
\end{align*}
and, assuming now that $f\equiv0$, we use~\eqref{eq: Au_t reg} to 
bound the integral by~$C\delta^{-1}\|A^{1+\delta}u_0\|$.
\qed\end{proof}

Intermediate regularity of~$u_0$ ensures a milder growth of the 
error as~$t\to0$.

\begin{corollary}
Assume that $f\equiv0$ and $0<\delta\le1$.
If $0<\theta<1$ and $u_{0h}=P_hu_0$, then
\[
\|u_h(t)-u(t)\|\le C\delta^{-\theta}t^{-\alpha(1-\theta)}
\epsilon(h,\gamma)^2\|A^{(1+\delta)\theta}u_0\|
	\quad\text{for $0<t\le T$.}
\]
\end{corollary}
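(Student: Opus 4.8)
The plan is to obtain the stated estimate by interpolating, for each fixed $t\in(0,T]$, between the non-smooth-data bound of Theorem~\ref{thm: non-smooth u0} and the smooth-data bound of Theorem~\ref{thm: u0 smooth}. Since $f\equiv0$ and $u_{0h}=P_hu_0$, the error is $u_h(t)-u(t)=\mathcal{F}_h(t)u_0$, where $\mathcal{F}_h(t)=\E_h(t)P_h-\E(t)$ is bounded on $L_2(\Omega)$ by~\eqref{eq: ||E(t)||} and~\eqref{eq: ||Eh(t)||}. The task then reduces to bounding the operator norm of $\mathcal{F}_h(t)A^{-(1+\delta)\theta}$. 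I would view $(1+\delta)\theta$ as the point of fractional order $\theta$ between the exponents $0$ and $1+\delta$, and the two endpoint bounds should combine, via a geometric-mean constant, to produce exactly the factors $t^{-\alpha(1-\theta)}$ and $\delta^{-\theta}$.

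First I would record the two endpoint operator bounds. Taking $u_{0h}=P_hu_0$ in Theorem~\ref{thm: non-smooth u0} kills the first term and gives the $L_2(\Omega)\to L_2(\Omega)$ estimate $\|\mathcal{F}_h(t)w\|\le Ct^{-\alpha}\epsilon(h,\gamma)^2\|w\|$ for all $w\in L_2(\Omega)$. For the smooth endpoint I would apply Theorem~\ref{thm: u0 smooth}, again with $u_{0h}=P_hu_0$; here the term $\|P_hu_0-R_hu_0\|$ no longer vanishes, so I must reconcile the two choices of initial approximation. Using $\|P_hw-R_hw\|\le\|P_hw-w\|+\|w-R_hw\|\le C\epsilon(h,\gamma)^2\|Aw\|$ (the first term because $P_h$ is the $L_2(\Omega)$-orthoprojector and hence a best approximation, the second by the bound $\|v-R_hv\|\le C\epsilon(h,\gamma)^2\|Av\|$ already used in the proof of Theorem~\ref{thm: u0 smooth}), together with $\|Aw\|\le\lambda_1^{-\delta}\|A^{1+\delta}w\|\le C\|A^{1+\delta}w\|$ with $C$ independent of $\delta$, the extra term is absorbed into the main one (as $\delta^{-1}\ge1$), yielding $\|\mathcal{F}_h(t)w\|\le C\delta^{-1}\epsilon(h,\gamma)^2\|A^{1+\delta}w\|$ for all $w$ with $A^{1+\delta}w\in L_2(\Omega)$ and $0<t\le T$.

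With the two endpoints in hand I would interpolate by the three-lines theorem. Consider the family $\zeta\mapsto\mathcal{F}_h(t)A^{-\zeta(1+\delta)}$ on the strip $0\le\Re\zeta\le1$. On $\Re\zeta=0$ the operator $A^{-i\tau(1+\delta)}$ is unitary (as $A$ is self-adjoint and positive), so $\|\mathcal{F}_h(t)A^{-i\tau(1+\delta)}\|\le Ct^{-\alpha}\epsilon(h,\gamma)^2=:M_0$; on $\Re\zeta=1$ the smooth endpoint gives $\|\mathcal{F}_h(t)A^{-(1+i\tau)(1+\delta)}\|\le C\delta^{-1}\epsilon(h,\gamma)^2=:M_1$. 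Applying the three-lines theorem to $\iprod{\mathcal{F}_h(t)A^{-\zeta(1+\delta)}g,\psi}$ (for $g$ in the dense set of vectors of finite spectral support, then extending by density) yields $\|\mathcal{F}_h(t)A^{-\theta(1+\delta)}\|\le M_0^{1-\theta}M_1^{\theta}$. Writing $u_0=A^{-\theta(1+\delta)}\bigl(A^{(1+\delta)\theta}u_0\bigr)$ then gives
\[
\|u_h(t)-u(t)\|\le M_0^{1-\theta}M_1^{\theta}\|A^{(1+\delta)\theta}u_0\|
=C\,t^{-\alpha(1-\theta)}\delta^{-\theta}\epsilon(h,\gamma)^2\|A^{(1+\delta)\theta}u_0\|,
\]
which is the claim. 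A pleasant feature is that the interpolation simultaneously relaxes the regularity requirement: the intermediate bound holds for every $u_0\in\D(A^{(1+\delta)\theta})$, even though the smooth endpoint presupposes $A^{1+\delta}u_0\in L_2(\Omega)$.

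The main obstacle is the interpolation step itself — one must justify applying the three-lines theorem to the operator-valued map $\zeta\mapsto\mathcal{F}_h(t)A^{-\zeta(1+\delta)}$, checking analyticity and uniform boundedness on the closed strip (most cleanly by restricting to vectors of finite spectral support, where everything reduces to a finite sum of entire functions with eigenvalues bounded below by $\lambda_1>0$) and the unitarity of the purely imaginary powers of $A$. The secondary, bookkeeping-level point is the reconciliation of $u_{0h}=P_hu_0$ with the Ritz-based statement of Theorem~\ref{thm: u0 smooth}, ensuring the discrepancy $\|P_hu_0-R_hu_0\|$ is of the required order $\epsilon(h,\gamma)^2$ and introduces no spurious dependence on $\delta$.
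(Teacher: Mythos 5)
Your proposal is correct and follows essentially the same route as the paper: fix $u_{0h}=P_hu_0$, obtain the non-smooth endpoint bound $Ct^{-\alpha}\epsilon(h,\gamma)^2\|u_0\|$ from Theorem~\ref{thm: non-smooth u0} and the smooth endpoint bound $C\delta^{-1}\epsilon(h,\gamma)^2\|A^{1+\delta}u_0\|$ from Theorem~\ref{thm: u0 smooth} (absorbing the discrepancy $\|P_hu_0-R_hu_0\|\le C\epsilon(h,\gamma)^2\|Au_0\|$), and then interpolate between the two. The only difference is one of detail: the paper dismisses the final step with the phrase ``by interpolation,'' whereas you justify it explicitly via a three-lines argument on $\zeta\mapsto\mathcal{F}_h(t)A^{-\zeta(1+\delta)}$ using unitarity of the imaginary powers of $A$ --- a legitimate filling-in of the paper's implicit step, not a different method.
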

\begin{proof}
Since
$\|u_{0h}-R_hu_0\|=\|P_h(u_0-R_hu_0)\|\le\|u_0-R_hu_0\|
	\le C\epsilon(h,\gamma)^2\|Au_0\|$, we see that
\[
\|u_h(t)-u(t)\|\le Ct^{-\alpha}\epsilon(h,\gamma)^2\|u_0\|
\]
and
\[
\|u_h(t)-u(t)\|
	\le C\delta^{-1}\epsilon(h,\gamma)^2\|A^{1+\delta}u_0\|.
\]
By interpolation, 
$\|u_h(t)-u(t)\|\le C\bigl(
t^{-\alpha}\epsilon(h,\gamma)^2\bigr)^{1-\theta}
\bigl(\delta^{-1}\epsilon(h,\gamma)^2\bigr)^\theta
\|A^{(1+\delta)\theta}u_0\|$.
\qed\end{proof}

\subsection{The inhomogeneous equation}
When $u_0=0$~and $f\ne0$, we readily adapt the proof of 
Theorem~\ref{thm: u0 smooth} to show the following error bound;
see also McLean and Thom\'ee~\cite[Lemma~4.1]{McLeanThomee2010}.
Instead of~\eqref{eq: Au_t reg}, we now rely on the regularity 
result~\cite[Theorem~4.1]{McLean2010}
\begin{equation}\label{eq: Au reg}
\|A\E(t)v\|\le Ct^{-\alpha}\|v\|\quad\text{for $0<t\le T$.}
\end{equation}

\begin{theorem}\label{thm: u0=0}
If $u_0=u_{0h}=0$ then
\[
\|u_h(t)-u(t)\|\le Ct^{1-\alpha}\epsilon(h,\gamma)^2\biggl(
    \|f(0)\|+\int_0^t\|\partial_tf(s)\|\,ds\biggr)
    \quad\text{for $t>0$.}
\]
\end{theorem}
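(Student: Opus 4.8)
The plan is to recycle the error splitting from the proof of Theorem~\ref{thm: u0 smooth}. Writing $u_h(t)-u(t)=\vartheta(t)+\varrho(t)$ with $\vartheta(t)=u_h(t)-R_hu(t)\in V_h$ and $\varrho(t)=R_hu(t)-u(t)$, I note that the derivation of~\eqref{eq: theta} never used $f\equiv0$, so $\vartheta$ remains the finite element solution of the fractional diffusion problem with source $-\partial_t\varrho$. The stability estimate~\eqref{eq: uh stable} applied to $\vartheta$, together with $\|\varrho(t)\|\le\|\varrho(0)\|+\int_0^t\|\partial_t\varrho(s)\|\,ds$, yields
\[
\|u_h(t)-u(t)\|\le\|\vartheta(0)\|+\|\varrho(0)\|
  +2\int_0^t\|\partial_t\varrho(s)\|\,ds.
\]
Since $u_0=u_{0h}=0$ gives $u(0)=0$ and hence $\vartheta(0)=\varrho(0)=0$, only the integral remains. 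Using $\|\partial_t\varrho(s)\|=\|(R_h-I)\partial_tu(s)\|\le C\epsilon(h,\gamma)^2\|A\partial_tu(s)\|$, which follows from Theorem~\ref{thm: elliptic error} exactly as in the previous proof, I reduce the whole estimate to bounding $\int_0^t\|A\partial_tu(s)\|\,ds$.

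The new ingredient is therefore the control of $A\partial_tu$ driven by $f$. With $u_0=0$ the Duhamel formula~\eqref{eq: u(t) mild} becomes $u(t)=\int_0^t\E(\tau)f(t-\tau)\,d\tau$, and differentiating under the integral sign (the boundary contribution at $\tau=t$ producing $\E(t)f(0)$ because $\E(0)=I$) gives
\[
\partial_tu(t)=\E(t)f(0)+\int_0^t\E(t-s)\partial_tf(s)\,ds.
\]
Applying $A$ and invoking the regularity estimate~\eqref{eq: Au reg} then leads to
\[
\|A\partial_tu(t)\|\le Ct^{-\alpha}\|f(0)\|
  +C\int_0^t(t-s)^{-\alpha}\|\partial_tf(s)\|\,ds.
\]

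It remains to integrate in time. The first term integrates to a multiple of $t^{1-\alpha}\|f(0)\|$, while for the double integral I switch the order of integration and use $\int_s^t(\tau-s)^{-\alpha}\,d\tau\le t^{1-\alpha}/(1-\alpha)$ to obtain a multiple of $t^{1-\alpha}\int_0^t\|\partial_tf(s)\|\,ds$. Combining these with the reduction from the first paragraph produces the claimed bound. I expect the only subtle steps to be justifying the differentiation of the convolution that exposes the $\E(t)f(0)$ term (requiring $f$ to be differentiable in $t$ with values in $L_2(\Omega)$) and confirming that the $t^{-\alpha}$ and $(t-s)^{-\alpha}$ singularities are integrable; both are harmless since $0<\alpha<1$. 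Everything else is a direct transcription of the machinery already assembled for Theorem~\ref{thm: u0 smooth}.
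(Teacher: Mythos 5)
Your proposal is correct and essentially reproduces the paper's own proof: the same $\vartheta+\varrho$ splitting via \eqref{eq: theta} and the stability estimate \eqref{eq: uh stable}, the same reduction to $C\epsilon(h,\gamma)^2\int_0^t\|A\partial_tu(s)\|\,ds$, the same differentiation of the Duhamel formula to expose $A\E(t)f(0)$ plus a convolution with $\partial_tf$, the same use of \eqref{eq: Au reg}, and the same interchange of the order of integration at the end. One trivial slip worth noting: with your parametrization $u(t)=\int_0^t\E(\tau)f(t-\tau)\,d\tau$, the boundary term $\E(t)f(0)$ arises simply from evaluating the integrand at $\tau=t$, so the appeal to $\E(0)=I$ is unnecessary (and would only be needed if you differentiated the form $\int_0^t\E(t-s)f(s)\,ds$ directly).
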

\begin{proof}
The equation~\eqref{eq: theta} for~$\vartheta$ holds for a 
general~$f$, and now $\vartheta(0)=\varrho(0)=0$, so stability of the 
finite element solution implies that
\[
\|u_h(t)-u(t)\|\le2\int_0^t\|\partial_t\varrho(s)\|\,ds
    \le C\epsilon(h,\gamma)^2\int_0^t\|A\partial_tu(s)\|\,ds.
\]
Since 
\[
Au(s)=A\int_0^s\E(s-\tau)f(\tau)\,d\tau
    =\int_0^sA\E(\tau)f(s-\tau)\,d\tau,
\]
we have
\[
A\partial_tu(s)=A\E(s)f(0)
    +\int_0^sA\E(\tau)\partial_tf(s-\tau)\,d\tau,
\]
so using \eqref{eq: Au reg}, 
\[
\|A\partial_tu(s)\|\le Cs^{-\alpha}\|f(0)\|
    +\int_0^s \tau^{-\alpha}\|\partial_tf(s-\tau)\|\,d\tau.
\]
Thus, 
\[
\int_0^t\|A\partial_tu(s)\|\,ds\le Ct^{1-\alpha}\|f(0)\|
+\int_0^t\int_0^s\tau^{-\alpha}\|\partial_tf(s-\tau)\|\,d\tau\,ds
\]
and the double integral equals
\[
\int_0^t\int_0^s(s-\tau)^{-\alpha}\|\partial_tf(\tau)\|\,d\tau\,ds
    =\int_0^t\|\partial_tf(\tau)\|(t-\tau)^{1-\alpha}\,d\tau,
\]
implying the desired estimate.
\qed\end{proof}

\section{Alternative boundary conditions}\label{sec: alt bc}
\subsection{Neumann boundary conditions}
Separation of variables in polar coordinates yields the functions
\[
u_n^\pm=r^{\pm n\beta}\cos(n\beta\theta)
\quad\text{for $n=1$, $2$, $3$, \dots}
\]
satisfying \eqref{eq: Laplacian in sector} with $\partial_\theta 
u_n^\pm=0$ if $\theta=0$ or $\theta=\pi/\beta$.  In addition, 
for~$n=0$ we find $u_0^+=1$~and $u_0^-=\log r$, and can readily check 
that
\begin{equation}\label{eq: un H1}
\eta u_n^+\in H^1(\Omega)
\quad\text{but}\quad
\eta u_n^-\notin H^1(\Omega)
\quad\text{for all $n\ge0$,} 
\end{equation}
and that $\eta u^+_n\in H^2(\Omega)$ iff $n\ne1$.
If we impose a homogeneous Neumann boundary condition~$\partial_n 
u=0$ on~$\partial\Omega$, then our results are essentially unchanged, 
but the fact that $A=-K\nabla^2$ now possesses a zero eigenvalue 
complicates the analysis~\cite[Section~4]{MustaphaMcLean2011}.

\subsection{Mixed boundary conditions}
The functions
\[
u_n^\pm=r^{(n-\tfrac12)\beta}\sin(n-\tfrac12)\beta\theta
\quad\text{for $n=1$, $2$, $3$, \dots}
\]
satisfy \eqref{eq: Laplacian in sector} with
$u_n^\pm=0$ if $\theta=0$ and $\partial_\theta u_n^\pm=0$ if 
$\theta=\pi/\beta$.  Once again, \eqref{eq: un H1} holds, for 
all~$n\ge1$, however now we have
\[
\eta u_n^+\in H^2(\Omega)\quad\text{for all $n\ge3$,}
\quad\text{but}\quad
\eta u_1^+, \eta u_2^+\notin H^2(\Omega),
\]
assuming $1/2<\beta<1$.  A new feature is that $\eta u_1^+\notin 
H^2(\Omega)$ also when $1\le\beta<2$, that is, for an interior angle 
between $\pi/2$~and $\pi$, in which case $\Omega$ is in fact convex.
The proof of Theorem~3.1 must be modified by replacing $\beta$ 
with~$\beta/2$, and replacing $\epsilon(h,\gamma)$ with
\begin{equation}\label{eq: epsmix}
\epsmix(h,\gamma)=\begin{cases}
	h^{\gamma\beta/2}/\sqrt{\gamma^{-1}-\beta/2},&
		1\le\gamma<2/\beta,\\
	h\sqrt{\log(1+h^{-1})},&\gamma=2/\beta,\\
	h/\sqrt{\beta/2-\gamma^{-1}},&\gamma>2/\beta,
\end{cases}
\end{equation}
provided the interior angles at~$p_1$, $p_2$, \dots, $p_{J-1}$ are 
all less than or equal to~$\pi/2$.  We may then proceed as for 
Dirichlet boundary conditions (since all the eigenvalues of~$A$ are 
strictly positive), with $\epsilon(h,\gamma)$ replaced by 
$\epsmix(h,\gamma)$ in our error estimates.
\section{Numerical experiments}\label{sec: numerical}
We consider two problems posed on a domain of the form
\[
\Omega=\{\,(r\cos\theta,r\sin\theta):
	\text{$0<r<1$ and $0<\theta<\pi/\beta$}\,\},
\]
with~$\beta=2/3$. Although $\Omega$ is not a polygon, the 
additional error in~$u_h$ due to approximation of the curved part 
of~$\partial\Omega$ is of order~$h^2$ in~$L_2(\Omega)$, and hence
our error bounds should apply unchanged. To fix the time 
scale for the solutions of the fractional diffusion 
equation~\eqref{eq: fpde}, we choose the generalized diffusivity~$K$ 
so that the smallest eigenvalue of~$A=-K\nabla^2$ equals~$1$. 
Figure~\ref{fig: meshes} shows two successive meshes out of a sequence 
satisfying our assumptions \eqref{eq: h gamma}~and \eqref{eq: h gamma 
0} 
for~$\gamma=1/\beta=3/2$; notice that these meshes are not nested.  
The mesh generation code takes a specified $h_*$~and $\gamma$ and 
produces a triangulation with maximum element diameter~$h$ equivalent 
to~$h_*$.  All source files were written in Julia~0.4~\cite{Julia}
with some calls to Gmsh~2.10.1~\cite{Gmsh}, and all computations 
performed on a desktop PC with~16GB of~RAM and an Intel Core i7-4770 
CPU.
\begin{figure}
\caption{Meshes with $h_*=2^{-3}$ (left) and $h_*=2^{-4}$ (right) 
from a sequence satisfying \eqref{eq: h gamma}~and 
\eqref{eq: h gamma 0} for~$\gamma=3/2$.}\label{fig: meshes} 
\begin{center}
\includegraphics[scale=0.35]{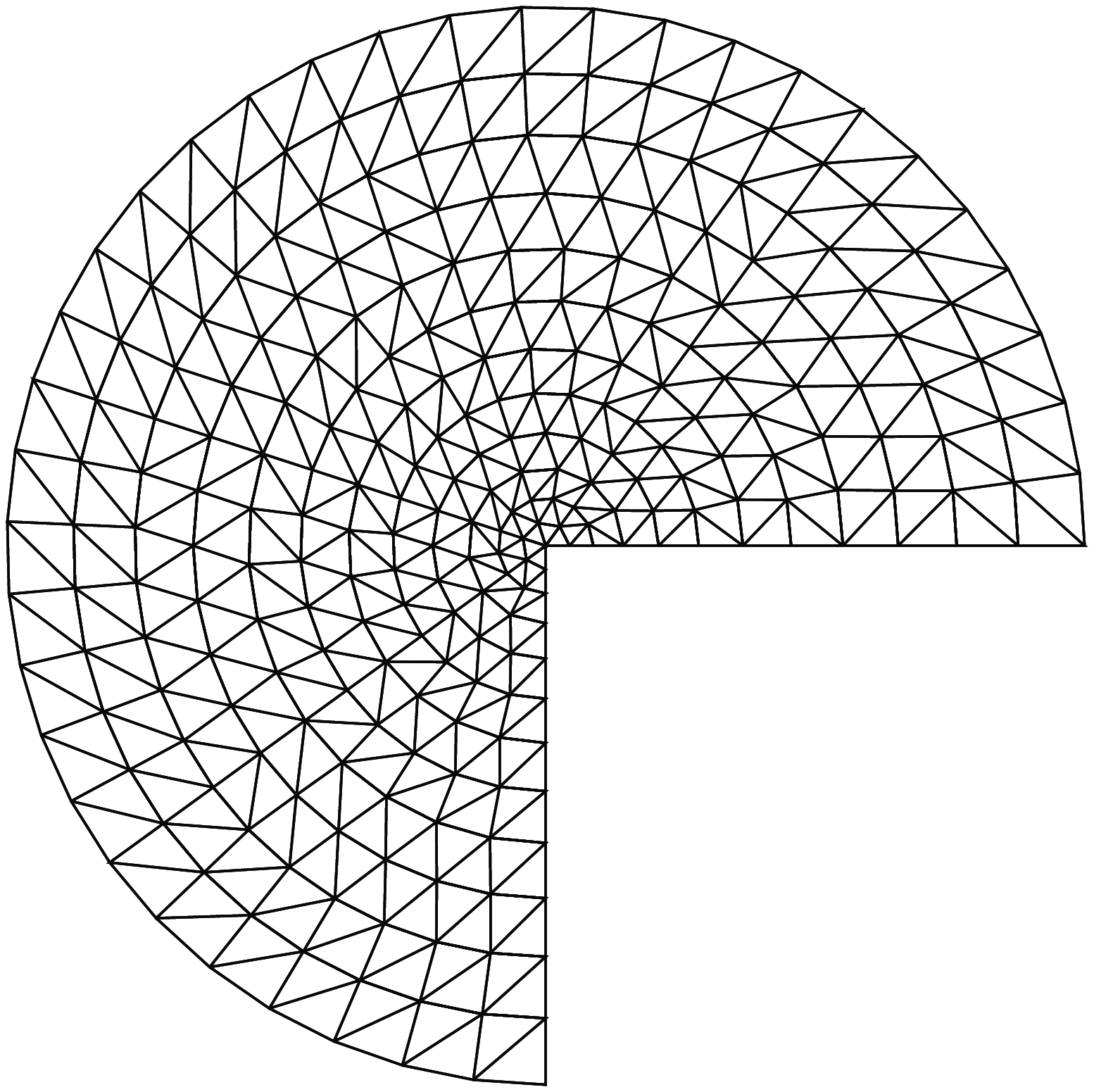}
\hfill
\includegraphics[scale=0.35]{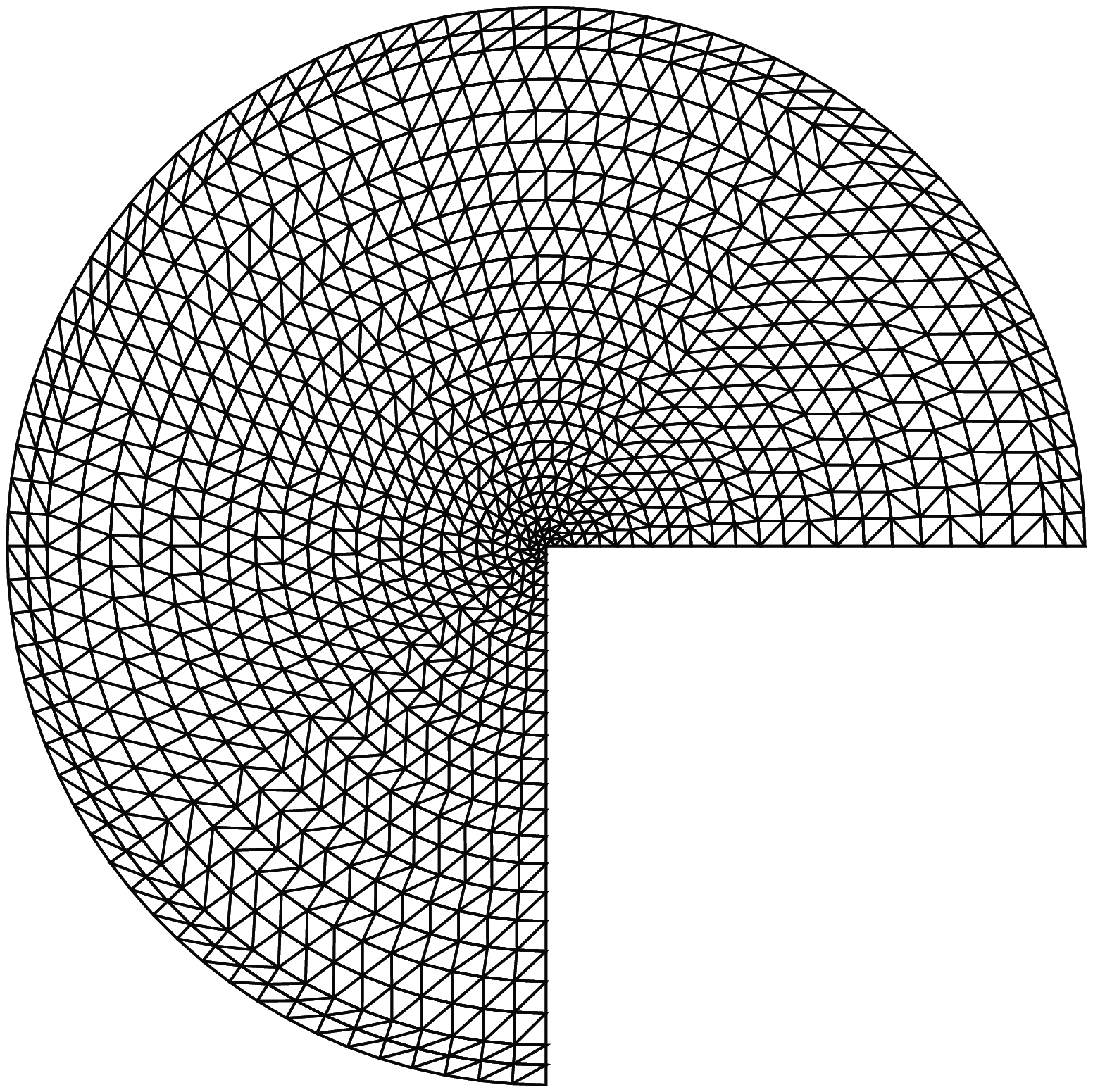}
\end{center}
\end{figure}

For the time integration, we use a 
technique~\cite{McLeanThomee2010,WeidemanTrefethen2007} based on a 
quadrature approximation to the Laplace inversion formula,
\[
u_h(t)=\frac{1}{2\pi i}\int_\Gamma
	e^{zt}\hat u_h(z)\,dz
	=\frac{1}{2\pi i}\int_{-\infty}^\infty 
		e^{z(\xi)t}\hat u\bigl(z(\xi)\bigr)z'(\xi)
		\,d\xi,
\]
where the contour~$\Gamma$ has the parametric representation
\[
z(\xi)=\mu\bigl(1-\sin(\delta-i\xi)\bigr)
	\quad\text{for $-\infty<\xi<\infty$,}
\]
with $\delta=1.1721\,0423$ and $\mu=4.4920\,7528\,M/t$ for 
given~$t>0$ and a chosen positive integer~$M$.
Therefore, the contour~$\Gamma$ is the left branch of an hyperbola 
with asymptotes~$y=\pm(x-\mu)\cot\delta$ for~$z=x+iy$.  Putting
\[
z_j=z(\xi_j),\quad z'_j=z'(\xi_j),\quad
\xi_j=j\,\Dxi,\quad\Dxi=\frac{1.0817\,9214}{M},
\]
we define
\[
U_{M,h}(t)=\frac{\Dxi}{2\pi i}\sum_{j=-M}^Me^{z_jt}
	\hat u_h(z_j)z'_j\approx u_h(t).
\]
To compute $\hat u_h(z_j)$ we solve the (complex) finite element 
equations
\[
z_j^\alpha\iprod{\hat u_h(z_j),\chi}+a\bigl(\hat u_h(z_j),\chi\bigr)
	=z_j^{\alpha-1}\iprod{u_{0h}+\hat f(z_j),\chi},\quad\chi\in V_h,
\]
and since we choose real $u_{0h}$~and $f$, it follows that
$\hat u_h(z_{-j})=\hat u_h(\bar z_j)=\overline{\hat u_h(z_j)}$ so the 
number of elliptic solves needed to evaluate~$U_{M,h}(t)$ is~$M+1$,
not~$2M+1$.  An error bound for the quadrature 
error~$\|U_{M,h}(t)-u_h(t)\|$ includes a decay factor~$10.1315^{-M}$, 
and we observe in practice that the overall 
error~$\|U_{M,h}(t)-u(t)\|$ is dominated by the finite element 
error~$\|u_h(t)-u(t)\|$ for modest values of~$M$.  In the computations
reported below we use $M=8$ to compute $U_{M,h}(t)\approx u_h(t)$, 
and choose $u_{0h}=P_hu_0$ for the discrete initial data.

\begin{figure}
\caption{Behaviour of the $L_2$-error~$\|u_h(t)-u(t)\|$ for Example~1
when~$t=1$ --- quasiuniform versus locally refined triangulations.}
\label{fig: locally-refined} 
\begin{center}
\includegraphics[scale=0.4]{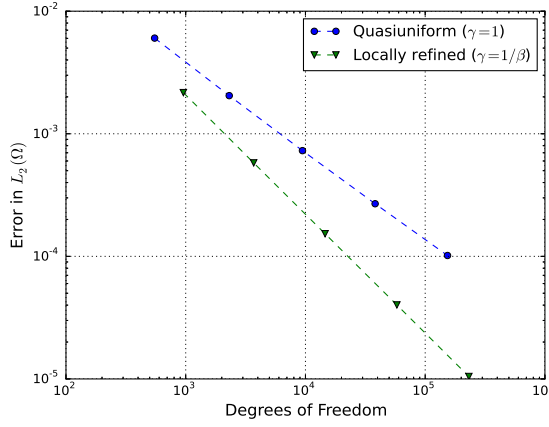}
\end{center}
\end{figure}
\subsection*{Example 1}
In our first example, we use $\alpha=1/2$ and choose $u_0$~and $f$ 
so that the solution of the initial-boundary value problem 
for~\eqref{eq: fpde} is
\[
u(x,y,t)=\bigl(1+\omega_{\alpha+1}(t)\bigr)
	r^\beta(1-r)\sin(\beta\theta).
\]
In view of \eqref{eq: E(t)}~and \eqref{eq: u(t) mild}, the singular 
behaviour of~$u$ as $r\to0$~or $t\to0$ is typical for such problems.
Figure~\ref{fig: locally-refined} compares the behaviour 
of the $L_2$-error at~$t=1$ for quasi-uniform ($\gamma=1$) and 
locally-refined ($\gamma=1/\beta=3/2$) triangulations.  From
Theorems \ref{thm: non-smooth u0}~and \ref{thm: u0=0}, we expect 
errors of order~$\epsilon(h,1)^2=h^{2\gamma\beta}=h^{4/3}$~and
$\epsilon(h,3/2)^2=h^2\log^2(1+h^{-1})$, respectively.  The number of 
degrees of freedom is of order $h^{-2}$ in both cases, so in 
Figure~\ref{fig: locally-refined} we expect the corresponding error 
curves to be straight lines with gradients $-2/3$~and $-1$, 
which are in fact close to the observed values $-0.7249$~and 
$-0.9707$, respectively, as determined by simple linear least squares 
fits.

\begin{figure}
\caption{The $L_2$-error as a function of~$t$ for Example~2
with $\alpha=1/2$~and $\gamma=2/\beta$.}
\label{fig: t dependence} 
\begin{center}
\includegraphics[scale=0.4]{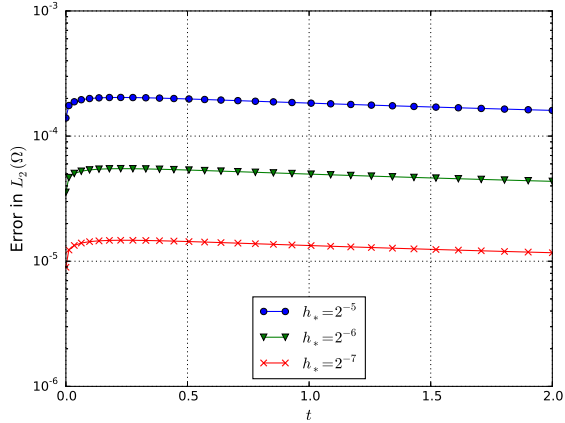}
\end{center}
\end{figure}
\begin{table}
\caption{$L_2$-Errors and empirical convergence rates (powers
of~$h$) for Example~2 when~$t=1$, with $\gamma=2/\beta$ and 
different choices of~$\alpha$.  (Recall that $N$ denotes the number 
of degrees of freedom in the finite element triangulation.)}
\label{tab: mixed}
\begin{center}
\begin{tabular}{cr|cc|cc|cc}
\multicolumn{2}{c}{}&
\multicolumn{2}{|c|}{$\alpha=1/4$}&
\multicolumn{2}{|c|}{$\alpha=1/2$}&
\multicolumn{2}{|c}{$\alpha=3/4$}\\
\hline
$h_*$   &\multicolumn{1}{c|}{$N$}
               &error    &rate &error    &rate &error    &rate\\
\hline
$2^{-4}$&  1957&1.465e-03&     &1.485e-03&     &1.452e-03&     \\ 
$2^{-5}$&  7593&3.673e-04&1.996&3.723e-04&1.996&3.640e-04&1.997\\ 
$2^{-6}$& 29771&9.471e-05&1.955&9.597e-05&1.956&9.380e-05&1.956\\ 
$2^{-7}$&117039&2.420e-05&1.969&2.451e-05&1.970&2.391e-05&1.972\\ 
$2^{-8}$&466089&6.059e-06&1.998&6.119e-06&2.002&5.931e-06&2.011
\end{tabular}
\end{center}
\end{table}
\subsection*{Example 2}
In our second example, we impose mixed boundary conditions:  
homogeneous Dirichlet for $\theta=0$~or $r=1$, and homogeneous 
Neumann for~$\theta=\pi/\beta$.  As the initial data we choose the 
first eigenfunction of~$A=-K\nabla^2$,
\[
u_0(x,y)=J_{\beta/2}(\omega r)\sin(\tfrac12\beta\theta),
\]
where $\omega$ is the first positive zero of the 
Bessel function~$J_{\beta/2}$.  We put $f=0$ so (recalling that our 
choice of~$K$ means that the corresponding eigenvalue equals~1) the 
solution is $u(x,y,t)=\E(t)u_0=E_\alpha(-t^\alpha)u_0(x,y)$, and 
choose $\gamma=2/\beta=3$ so that $\epsmix(h,\gamma)$ is of 
order~$h\log(1+h^{-1})$; see \eqref{eq: epsmix}.
Since $A^ru_0\propto u_0\in L_2(\Omega)$ for all~$r>0$, we conclude 
from  Theorem~\ref{thm: u0 smooth} that the 
$L_2$-error~$\|u_h(t)-u(t)\|$ is of order~$h^2\log^2(1+h^{-1})$ 
uniformly for~$0\le t\le T$.  
Figure~\ref{fig: t dependence} confirms this behaviour in the 
case~$\alpha=1/2$.   Finally, Table~\ref{tab: mixed} shows that 
at a fixed positive time~$t=1$ the $L_2$-error does not vary much 
with~$\alpha$.
\bibliographystyle{spmpsci}
\bibliography{nonconvexrefs}

\begin{thebibliography}{10}
\providecommand{\url}[1]{{#1}}
\providecommand{\urlprefix}{URL }
\expandafter\ifx\csname urlstyle\endcsname\relax
  \providecommand{\doi}[1]{DOI~\discretionary{}{}{}#1}\else
  \providecommand{\doi}{DOI~\discretionary{}{}{}\begingroup
  \urlstyle{rm}\Url}\fi

\bibitem{ApelSaendigWhiteman1996}
Apel, T., S\"andig, A.M., Whiteman, J.: Graded mesh refinement and error
  estimates for finite element solutions of elliptic boundary value problems in
  non-smooth domains.
\newblock Math. Methods Appl. Sci. \textbf{19}, 63--85 (1996).
\newblock \doi{10.1002/(SICI)1099-1476(19960110)19:1<63::AID-MMA764>3.0.CO;2-S}

\bibitem{Julia}
Bezanson, J., Edelman, A., Karpinski, S., Shah, V.B.: Julia: a fresh approach
  to numerical computing.
\newblock Tech. rep. (2015).
\newblock \urlprefix\url{http://arxiv.org/abs/1411.1607v4}.
\newblock ArXiv:1411.1607v4

\bibitem{ChatzipantelidisEtAl2006}
Chatzipantelidis, P., Lazarov, R.D., Thom\'ee, V., Wahlbin, L.B.: Parabolic
  finite element equations in nonconvex polygonal domains.
\newblock BIT \textbf{46}, S113--S143 (2006).
\newblock \doi{10.1007/s10543-006-0087-7}

\bibitem{Ciarlet2002}
Ciarlet, P.G.: The Finite Element Method for Elliptic Problems.
\newblock SIAM (2002)

\bibitem{Gmsh}
Geuzaine, C., Remacle, J.F.: Gmsh: a 3-d finite element mesh generator with
  built-in pre- and post-processing facilities.
\newblock Internat. J. Numer. Methods Engrg. \textbf{79}, 1309--1331 (2009).
\newblock \doi{10.1002/nme.2579}

\bibitem{Grisvard1992}
Grisvard, P.: Singularities in Boundary Value Problems.
\newblock Masson, Springer (1992)

\bibitem{JinLazarovZhou2013}
Jin, B., Lazarov, R., Zhou, Z.: Error estimates for a semidiscrete finite
  element method for fractional order parabolic equations.
\newblock SIAM J. Numer. Anal. \textbf{51}, 445--466 (2013).
\newblock \doi{10.1137/120873984}

\bibitem{KlafterSokolov2011}
Klafter, J., Sokolov, I.M.: First steps in random walks: from tools to
  applications.
\newblock Oxford University Press (2011)

\bibitem{LeGiaMcLean2011}
{Le Gia}, Q.T., McLean, W.: Numerical solution of a parabolic equation on the
  sphere using {L}aplace transforms and radial basis functions.
\newblock In: W.~McLean, A.J. Roberts (eds.) Proceedings of the 15th Biennial
  Computational Techniques and Applications Conference, CTAC-2010, \emph{ANZIAM
  J.}, vol.~52, pp. C89--C102 (2011).
\newblock \url
  {http://anziamj.austms.org.au/ojs/index.php/ANZIAMJ/article/view/3922}

\bibitem{McLean2010}
McLean, W.: Regularity of solutions to a time-fractional diffusion equation.
\newblock ANZIAM J. \textbf{52}, 123--138 (2010).
\newblock \doi{10.1017/S1446181111000617}

\bibitem{McLeanThomee2010}
McLean, W., Thom\'ee, V.: Numerical solution via {L}aplace transforms of a
  fractional order evolution equation.
\newblock J. Integral Equations Appl. \textbf{22}, 57--94 (2010).
\newblock \doi{10.1216/JIE-2010-22-1-57}

\bibitem{MustaphaMcLean2011}
Mustapha, K., McLean, W.: Piecewise-linear, discontinuous {G}alerkin method for
  a fractional diffusion equation.
\newblock Numer. Algor. \textbf{56}, 159--184 (2011).
\newblock \doi{10.1007/s11075-010-9379-8}

\bibitem{WeidemanTrefethen2007}
Weideman, J.A.C., Trefethen, L.N.: Parabolic and hyperbolic contours for
  computing the {B}romwich integral.
\newblock Math. Comp. \textbf{76}, 1341--1356 (2007).
\newblock \doi{10.1090/S0025-5718-07-01945-X}

\end{thebibliography}
\end{document}